\newtheorem{theorem}{Theorem}[section]
\newtheorem{lemma}[theorem]{Lemma}
\newtheorem{proposition}[theorem]{Proposition}
\newtheorem*{prop*}{Proposition}
\newtheorem*{cor*}{Corollary}
\newtheorem{corollary}[theorem]{Corollary}
\newtheorem*{theorem*}{Theorem}
\theoremstyle{definition}
\newtheorem{definition}[theorem]{Definition}
\newtheorem{example}[theorem]{Example}
\newtheorem{remark}[theorem]{Remark}
\newtheorem*{remark*}{Remark}
\newtheorem*{example*}{Example}
\newcommand{\Z}{{\rm \mathbb Z}}
\newcommand{\secat}{{\sf secat}}
\newcommand{\TC}{{\sf TC}}
\newcommand{\tG}{{\times_G}}
\newcommand{\wTC}{{\sf wTC}}
\newcommand{\cat}{{\sf cat}}
\newcommand{\hdim}{{\sf hdim}}
\newcommand{\Id}{{\rm Id}}
\newcommand{\CP}{\mathbb{CP}}
\newcommand{\C}{{{\mathbb C}}}
\title{Sequential parametrized topological complexity and related invariants}
\author{M. Farber and J. Oprea}
\thanks{M. Farber was partially supported by a grant from the EPSRC}
\date{26 February 2023}                                           
\begin{document}
\begin{abstract}
Parametrized motion planning algorithms \cite{CFW} have a high degree of universality and flexibility; they generate the 
motion of a robotic system under a variety of external conditions. The latter are viewed as parameters and
constitute part of the input of the algorithm. The concept of sequential parametrized topological complexity 
$\TC_r[p:E\to B]$ is a measure of the complexity of such algorithms. It was studied in \cite{CFW, CFW2} for $r=2$ and in \cite{FP} for $r\ge 2$. 
In this paper we analyse the dependence of the complexity $\TC_r[p:E\to B]$ on an initial bundle with structure group $G$ and on its fibre $X$ 
viewed as a $G$-space. Our main results estimate $\TC_r[p:E\to B]$ in terms of certain invariants of the bundle and the action on the fibre.
Moreover, we also obtain estimates depending on the base and the fibre.
Finally, we develop a calculus of sectional categories featuring a new invariant $\secat_f[p:E\to B]$ which plays an important role in the study of 
sectional category of towers of fibrations.
\end{abstract}

\subjclass[2020]{55M30}

\maketitle

\section{Introduction}

 Motion planning algorithms of robotics control autonomous robots in engineering, see \cite{LV}.
 A motion planning algorithm takes as input the initial and the desired states of the system and produces as output a motion of the system starting at the initial states and ending at the desired states.
 A robot is \lq\lq told\rq\rq\ where it needs to go and the execution of this task, including selection of a specific route of motion, is made by the robot itself, i.e. by the robot's motion planning algorithm.
Typically it is understood that the external conditions (such as the positions of the obstacles and the geometry of the enclosing domain) are known and are constant during the motion.

 In papers \cite{CFW, CFW2}, motion planning algorithms of a new type were analyzed.
 These are {\it parametrized motion planning algorithms}, which, besides the initial and desired states, take as input the parameters characterising the external conditions. The output of a parametrized motion planning algorithm is a continuous motion of the system from the initial to the desired state, respecting the given external conditions. The papers \cite{CFW, CFW2} laid out the new formalism and analysed in full detail the problem of moving an arbitrary number $n$ of robots in the domain with $m$ a priori unknown obstacles.

The recent paper \cite{FP} developed a generalisation where the robot must perform {\it a sequence} of tasks. The topological complexity of such an algorithm is called {\it sequential parametrized topological complexity} $\TC_r[p:E\to B]$, where $r=2, 3, \dots$, and the case $r=2$ corresponds to the situations analysed in \cite{CFW, CFW2}.
Formally, $\TC_r[p:E\to B]$ is an integer associated with a fibration $p:E\to B$ where the points of the base $b\in B$ parametrize the external conditions (for example, positions of the obstacles) and for each $b\in B$ the fibre $X_b= p^{-1}(b)\subset E$ is the space of all admissible configurations of the system under the external conditions $b$. To make the present work independent, we include the definition of the concept $\TC_r[p:E\to B]$ and its major properties in \S \ref{sec2}.

In this paper we further analyse  the invariant $\TC_r[p:E\to B]$ trying to understand its dependence on classical invariants of the initial bundle $p:E\to B$: in particular, on its base $B$ and on its fibre $X$. As with all such invariants, exact calculation is generally hard and the development of lower and upper bounds is an essential part of the subject. This is the focus of our work.

We first show that, if the bundle $p:E\to B$ has structure group $G$ with fibre $X$ a $G$-space, then the equivariant sequential topological complexity of $X$ (developed in \cite{CG, BS}) serves as an upper bound for $\TC_r[p:E\to B]$ (see Theorem \ref{upperbound}). The case when $G$ acts freely on $X$ is especially interesting and leads to several somewhat surprising estimates. But
using equivariant topological complexity as an upper bound is fraught with danger since it can be infinite in what appear to be innocuous situations.

As an alternative we develop the notion of \emph{weak sequential equivariant complexity}, denoted by $\TC^w_{r, G}(X)$, and its variant
$\TC^w_{r, G}(X; P)$ which we are tempted (but loathe) to call \emph{weak sequential equivariant complexity with coefficients} $P$ (see \S \ref{sec7}).
We will give several examples showing that these invariants are finite even when equivariant topological complexity is infinite, so they offer the opportunity
for effective estimation in many situations.
Indeed, our main result Theorem \ref{thm:main} gives lower and upper bounds for $\TC_r[p: E\to B]$ in terms of these invariants. To state it one needs to recall the
 invariant
 $$G\mbox{-}\cat[p: E\to B]$$
 introduced by I. M. James  (\cite{Ja}, p. 342). It is defined as the smallest integer $k\ge 0$ such that the base $B$ admits an open cover $B=U_0\cup U_1\cup \dots\cup U_k$ with the property that over each set $U_i$ the bundle $E$ is trivial \emph{as a $G$-bundle}. Clearly, $G\mbox{-}\cat[p: E\to B]$ equals the sectional category $\secat[\tau: P\to B]$ of the associated principal bundle
 that constructs $p\colon E\to B$.
In general,
\begin{eqnarray}\label{gcat1}
G\mbox{-} \cat[p: E\to B]\, \le \, \cat(B)\, \le \dim B
\end{eqnarray}
and
if the group $G$ is $2$-connected (as is the case for simply connected compact Lie groups for instance)
then we can say
\begin{eqnarray}
G\mbox{-}\cat[p: E\to B]\, \le \, \left\lceil \frac{\dim B -3}{4}\right\rceil
\end{eqnarray}
as follows by applying Theorem 5 from \cite{Sch} to $\secat[\tau: P\to B]$.
If the structure group $G$ is discrete then instead of (\ref{gcat1}) one has a stronger inequality
\begin{eqnarray}\label{gcat2}
G\mbox{-} \cat[p: E\to B]\, \le \, \cat_1(B),
\end{eqnarray}
where $\cat_1(B)$ is the sectional category of the universal cover $\tilde B \to B$.
Our main result (Theorem \ref{thm:main}) then is the following.
\begin{theorem*}
For a locally trivial bundle $p: E=X\times_G P\to B=P/G$ one has the inequalities
\begin{eqnarray}\label{ineqmainn}
\TC_{r, G}^w(X; P) \, \le \, \TC_r[p: E\to B] \, \le \, G\mbox{-}\cat[p: E\to B] \, +\, \TC_{r, G}^w(X).
\end{eqnarray}
\end{theorem*}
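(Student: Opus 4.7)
The plan is to prove the two inequalities in (\ref{ineqmainn}) separately. The upper bound will come from combining a $G$-trivializing cover of the base $B$ with a weak equivariant cover of the fiber $X^r$, while the lower bound will come from pulling a parametrized motion planner back along the principal bundle $\tau : P \to B$.

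For the upper bound, set $k = G\mbox{-}\cat[p : E \to B]$ and $\ell = \TC_{r, G}^w(X)$, and fix an open cover $B = U_0 \cup \cdots \cup U_k$ with $G$-equivariant trivializations $p^{-1}(U_i) \cong X \times U_i$. The $r$-fold fibered product $E \times_B \cdots \times_B E$ then restricts over each $U_i$ to $X^r \times U_i$, with $G$ acting diagonally on $X^r$. Choose also a $G$-invariant open cover $X^r = V_0 \cup \cdots \cup V_\ell$ supporting the (weak homotopy) equivariant sections that witness $\TC_{r, G}^w(X) = \ell$. Because these sections are $G$-equivariant, they descend to local sections of the sequential parametrized path fibration over each piece $V_j \times U_i$ inside $E \times_B \cdots \times_B E$. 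This produces $(k+1)(\ell+1)$ open sets each carrying a local section; a refinement and amalgamation argument based on paracompactness, in the spirit of the subadditivity $\secat(f \circ g) \le \secat(f) + \secat(g)$ from Schwarz, then collapses the count to $k + \ell + 1$ open sets, yielding $\TC_r[p] \le k + \ell$.

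For the lower bound, suppose $\TC_r[p] = m$ and choose an open cover $E \times_B \cdots \times_B E = W_0 \cup \cdots \cup W_m$ admitting continuous sections of the parametrized path fibration. The associated bundle identification $E = X \times_G P$ gives the isomorphism $E \times_B \cdots \times_B E \cong X^r \times_G P$ (with diagonal $G$-action on the $X^r$ factor), which fits into a principal $G$-bundle $X^r \times P \to X^r \times_G P$. Pulling back the cover and its sections along this $G$-bundle produces a $G$-invariant open cover of $X^r \times P$ by $m+1$ sets, each carrying a $G$-equivariant local section of the corresponding equivariant path fibration. By the definition of weak equivariant complexity with coefficients $P$ this witnesses $\TC_{r, G}^w(X; P) \le m$, completing the lower bound.

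The main obstacle I anticipate is the amalgamation step of the upper bound: reducing $(k+1)(\ell+1)$ locally sectioned open sets to only $k + \ell + 1$ requires careful bookkeeping via paracompactness, analogous to the classical subadditivity of sectional category under composition of fibrations. A secondary subtlety is ensuring that the \emph{weak} (i.e.\ up to $G$-equivariant homotopy) sections coming from $\TC_{r, G}^w(X)$ yield sections of the type required in the definition of $\TC_r[p]$; if the definition of $\TC_r[p]$ already permits homotopy sections this is immediate, otherwise one must invoke the homotopy lifting property of the parametrized path fibration. Finally, one must verify that the equivariance in the definitions of $\TC_{r, G}^w(X)$ and $\TC_{r, G}^w(X; P)$ matches the diagonal $G$-action arising from the fibered product, which follows directly from the assumption that $p$ has structure group $G$.
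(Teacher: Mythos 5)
Your lower-bound argument coincides with the paper's and is correct: since $E=X\times_G P$ gives $E^r_B\cong X^r\times_G P$ and $E^I_B\cong X^I\times_G P$, the fibration $\rho_r\times 1: X^I\times P\to X^r\times P$ is the pullback of $\Pi_r\cong\rho_r\times_G 1$ along the orbit map $Q:X^r\times P\to X^r\times_G P$. A cover of $E^r_B$ admitting sections of $\Pi_r$ therefore pulls back to a $Q$-saturated cover of $X^r\times P$ admitting sections of $\rho_r\times 1$, which by definition gives $\TC^w_{r,G}(X;P)\le\TC_r[p:E\to B]$; the paper encapsulates this pullback transfer as Lemma~\ref{lm:map2}.

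Your upper-bound sketch has a genuine gap, which you partially acknowledge. You produce $(k+1)(\ell+1)$ locally-sectioned pieces $V_j\times U_i$ and then invoke an amalgamation ``in the spirit of the subadditivity $\secat(f\circ g)\le\secat(f)+\secat(g)$.'' But that naive subadditivity is not a theorem for a general composite of fibrations $E_2\stackrel{p_2}{\to}E_1\stackrel{p_1}{\to}E_0$: the open sets in $E_1$ witnessing $\secat(p_2)$ need not be $p_1$-saturated, so there is nothing in $E_0$ to amalgamate them with. This is exactly the obstruction that the paper's new invariant $\secat_f$ is designed to remove: Theorem~\ref{lm:tower} gives $\secat(p_1p_2)\le\secat(p_1)+\secat_{p_1}(p_2)$, where $\secat_{p_1}(p_2)$ counts covers of $E_1$ by $p_1$-saturated sets, and its proof rests on the property-mixing Lemma~\ref{lm1} rather than on generic paracompactness. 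Applied to the tower $X^I\times P\stackrel{\rho_r\times 1}{\longrightarrow}X^r\times P\stackrel{Q}{\longrightarrow}X^r\times_G P$, together with $\secat(Q)\le\secat(\tau)=G\mbox{-}\cat[p]$ and $\secat_Q(\rho_r\times 1)=\TC^w_{r,G}(X;P)\le\TC^w_{r,G}(X)$, this closes the gap and in fact proves the sharper bound of Theorem~\ref{thm:main}. Separately, your reading of ``weak'' is off: in $\TC^w_{r,G}(X)$ the sections of $\rho_r$ are \emph{not} required to be $G$-equivariant (only the open sets must be $G$-invariant), so the step ``because these sections are $G$-equivariant, they descend'' is precisely what is unavailable here. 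Were the sections equivariant you would be computing $\TC_{r,G}(X)$ and would obtain the bound of Theorem~\ref{thm:equiv} with no $G\mbox{-}\cat$ term; the $G\mbox{-}\cat$ term is the price of the missing equivariance, and quantifying that price is what $\secat_Q$ does.
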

Note that the first summand in the right hand side (RHS) of (\ref{ineqmainn}) is independent of $r$ and is bounded above by the Lusternik - Schnirelmann category of the base $B$; the second term is the weak equivariant sequential topological complexity of the fibre $X$. In our view, this estimate gets at the heart of the matter. After all, what \emph{is} a bundle? It is just a principal bundle together with
an action of the structure group on the fibre and our upper bound is expressed exactly in numerical quantities derived from these objects. 
In Example \ref{exam:canonical} we shall see that the RHS can be
an equality, so at least in some cases the upper bound can be sharp. Such \emph{a posteriori} knowledge then warrants a deeper study of the invariants  $\TC_{r, G}^w(X)$ and 
$\TC_{r, G}^w(X;P)$ and we hope the present work elicits this.

Beyond defining and applying  the new invariants $\TC_{r, G}^w(X)$ and $\TC_{r, G}^w(X;P)$, in sections \S\S \ref{sec4}, \ref{sec5}, \ref{sec6} 
we develop a calculus of sectional categories, including a new notion denoted by $\secat_f[p: E\to B]$ where $f: B\to C$ is a continuous map. 
It is this notion that allows us to estimate the sectional category of towers of fibrations which serves as the crucial technical tool in the proof of our main results. 
We believe that $\secat_f[p: E\to B]$ holds independent interest and should find application in many situations orbiting the twin galaxies of 
Lusternik-Schnirelmann category and topological complexity.

It is a pleasure to thank  Amit Paul, Debasis Sen and the referee
for several useful comments.

\section{The concept of sequential parametrized topological complexity}\label{sec2}
	
In this section we recall the notion of sequential parametrized topological complexity introduced in \cite{FP}. It is a generalisation of the concept of topological complexity \cite{Fa03} and its parametrized version \cite{CFW}.

Let $p : E \to B$ be a Hurewicz fibration with fibre $X$. Fix an integer $r\ge 2$ and denote
$$E^r_B= \{(e_1, \cdots, e_r)\in E^r; \, p(e_1)=\cdots = p(e_r)\}.$$
The symbol $I=[0,1]$ denotes the unit interval. Let $E^I_B\subset E^I$ be
the space of all paths $\gamma: I\to E$ such that the path $p\circ \gamma: I\to B$ is constant. Fix $r$ points
$$0\le t_1<t_2<\dots <t_r\le 1$$ and
consider the evaluation map
\begin{eqnarray}\label{Pir}
\Pi_r : E^I_B \to E^r_B, \quad \Pi_r(\gamma) = (\gamma(t_1), \gamma(t_2), \dots,  \gamma(t_r)).\end{eqnarray}
$\Pi_r$ is a Hurewicz fibration, see \cite[Appendix]{CFW2}. The  fibre of $\Pi_r$ is $(\Omega X)^{r-1}$, the Cartesian $(r-1)$-th power of the based loop space $\Omega X$.
A section $s: E^r_B \to E^I_B$ of the fibration $\Pi_r$ can be interpreted as {\it a parametrized motion planning algorithm}, i.e.
a function which assigns to every sequence of points $(e_1, e_2, \dots, e_r)\in E^r_B$ a continuous path $\gamma: I\to E$ (representing motion of the system) satisfying $\gamma(t_i)=e_i$ for every $i=1, 2, \dots, r$ and such that the path
$p\circ \gamma: I \to B$ is constant. The latter condition means that the system moves under {\it constant external conditions}
(such as positions of the obstacles etc).

Typically, the fibration $\Pi_r$ does not admit continuous sections, see Corollary 1 and Lemma 1 in \cite{FW2022} which deal with the case $r=2$; when $r>2$ the arguments are similar. Therefore the
motion planning algorithms are necessarily discontinuous in most situations.

The following definition \cite{FP} gives a measure of complexity of sequential parametrized motion planning algorithms.

\begin{definition}\label{def:main}
The {\it $r$-th sequential parametrized topological complexity} of the fibration $p : E \to B$, denoted $\TC_r[p : E \to B]$, is defined as the sectional category of the fibration $\Pi_r$, i.e.
\begin{eqnarray}
\TC_r[p : E \to B]:=\secat(\Pi_r).
\end{eqnarray}
\end{definition}	

In more detail, $\TC_r[p : E \to B]$ is the minimal integer $k$ such that there is an open cover
$\{U_0, U_1, \dots, U_k\}$ of $E^r_B$ with the property that each open set $U_i$
admits a continuous section $s_i : U_i \to E^I_B$ of $\Pi_r$.

Under some mild assumptions instead of open covers one can consider totally general partitions:

\begin{proposition} [see \cite{FP}, Proposition 3.6]
Let $E$ and $B$ be metrizable separable ANRs and let $p: E \to B$ be a locally trivial fibration. Then the sequential parametrized topological complexity $\TC_r[p: E \to B]$ equals the smallest integer $n$ such that $E_B^r$ admits a partition $$E_B^r=F_0 \sqcup F_1 \sqcup ... \sqcup F_n, \quad F_i\cap F_j= \emptyset \text{ \ for } i\neq j,$$
with the property that on each set $F_i$ there exists a continuous section $s_i : F_i \to E_B^I$ of $\Pi_r$.
\end{proposition}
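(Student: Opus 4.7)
The plan is to prove both inequalities between $\TC_r[p: E \to B]$ and the analogous quantity defined via partitions. The easy direction (partition number $\le \TC_r$) is standard disjointification: given an open cover $\{U_0, \ldots, U_k\}$ of $E_B^r$ with continuous sections $s_i: U_i \to E_B^I$ realizing $\TC_r[p: E \to B] = k$, set $F_0 = U_0$ and $F_i = U_i \setminus \bigcup_{j<i} U_j$. These form a partition of $E_B^r$, and each restriction $s_i|_{F_i}$ is automatically continuous as a subspace restriction.

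The reverse inequality is where the hypotheses are used essentially. Suppose we are given a partition $E_B^r = F_0 \sqcup \cdots \sqcup F_n$ carrying continuous sections $s_i$. First I would verify that $E_B^r$ and $E_B^I$ are themselves metrisable separable ANRs. Using local triviality of $p$, the fibre product $E_B^r$ is locally a product $U \times X^r$ over open sets $U \subset B$, where $X$ is the (ANR) fibre of $p$; an analogous local description holds for $E_B^I$. Metrisable spaces that are locally ANR are ANR, which delivers the required structure, and $\Pi_r$ remains a Hurewicz fibration between ANRs.

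The main technical step is an extension lemma of Schwarz type: for a Hurewicz fibration $\Pi: Y \to X$ between metric ANRs, any continuous section of $\Pi$ defined on an arbitrary subset $A \subset X$ extends to a continuous section on some open neighborhood of $A$. The argument proceeds in two stages. Stage one: viewing the section temporarily as an abstract map $A \to Y$, the ANR extension property for $Y$ produces an extension $s': V \to Y$ on some open $V \supset A$, typically failing to be a section. Stage two: since $\Pi \circ s'$ restricts to the inclusion on $A$, shrinking $V$ and using the ANR structure on $X$ yields a homotopy rel $A$ from $\Pi \circ s'$ to the inclusion $V \hookrightarrow X$; lifting this homotopy starting at $s'$ via the homotopy lifting property of $\Pi$ corrects $s'$ into a genuine section on the smaller neighborhood.

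The main obstacle, and the reason partial order and the ANR assumption matter, is arranging the neighborhoods $V_i \supset F_i$ simultaneously so that their union is still all of $E_B^r$. I would handle this by an inductive construction that leverages normality of $E_B^r$ together with a shrinking argument: at each stage, one chooses $V_i$ small enough not to spoil the sections already constructed on the $V_j$ with $j < i$, yet large enough that the complement of $V_0 \cup \cdots \cup V_i$ contains $F_{i+1} \sqcup \cdots \sqcup F_n$. Separability controls the combinatorics. Once this bookkeeping is carried out, $\{V_0, \ldots, V_n\}$ is an open cover of $E_B^r$ admitting sections, which yields $\TC_r[p: E \to B] \le n$ and completes the equality.
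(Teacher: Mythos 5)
The paper does not actually prove this statement; it is quoted from \cite{FP}, Proposition~3.6, with no argument supplied. So your proposal has to be judged on its own merits.

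Your easy direction (disjointification) is fine: setting $F_0=U_0$ and $F_i=U_i\setminus\bigcup_{j<i}U_j$ does produce a partition, and the restricted sections are continuous. The reduction of the converse to an extension lemma for partial sections of the Hurewicz fibration $\Pi_r$ between ANRs is also the right strategy. But the extension lemma is exactly where the work lives, and your proof of it has a genuine gap. You invoke the ANR extension property of the target $E^I_B$ to push the abstract map $s\colon F_i\to E^I_B$ out to an open neighborhood. That property asserts extension over a neighborhood for maps defined on \emph{closed} subsets of a metric space, and the $F_i$ arising from an arbitrary partition need not be closed (indeed, disjointification of open covers typically produces sets that are locally closed but not closed). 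Extension to an open neighborhood from a truly arbitrary subset of a metric space into an ANR can fail — the set of points of $\overline{F_i}$ to which $s$ extends continuously is only a $G_\delta$, and it can have empty interior, leaving no open neighborhood of $F_i$ inside it. So ``Stage one'' does not go through as written, and the ``Stage two'' correction (which itself needs the homotopy lifting property applied with some care near $\partial F_i$) is therefore built on sand. The standard route around this obstruction — used in \cite{FP} and in the García-Calcines work it relies on — is not to extend the section as an abstract map, but to re-express ``$p$ has a section over $A$'' in deformation terms (for $\Pi_r$, the inclusion $A\hookrightarrow E^r_B$ can be deformed fibrewise over $B$ into the diagonal), and then use a neighborhood-extension theorem for such deformations in ANRs; closedness is circumvented by working at the level of homotopies rather than maps.

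Finally, the last paragraph of your proposal is confused: once each $F_i$ has been enlarged to an open $V_i\supset F_i$ admitting a section, the family $\{V_0,\dots,V_n\}$ automatically covers $E^r_B$ because $\bigcup F_i=E^r_B$. There is nothing to ``arrange simultaneously,'' no way for one $V_i$ to ``spoil'' the section on another (sections on different sets of a cover are never required to agree on overlaps), and no role for an inductive shrinking argument or for separability in this step. The whole difficulty is concentrated in the single-set extension lemma.
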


If two fibrations $p : E \to B$ and $p' : E' \to B$ are fibrewise homotopy equivalent then
$\TC_r[p: E \to B] = \TC_r[p': E' \to B],$
see \cite{FP}, Corollary 4.2.

The following upper bound is a reformulation of Proposition 6.1 from \cite{FP}:

\begin{proposition}
	Let $p: E \to B$ be a locally trivial fibration with fiber $X$, where $E, B, X$ are CW-complexes. Assume that the fiber $X$ is $k$-connected, where $k\ge 0$. Then
	\begin{eqnarray}\label{upper}
	\TC_{r}[p: E \to B]\le \left\lceil
	\frac{ r\dim X+\dim B-k}{1+k}\right\rceil .
	\end{eqnarray}
\end{proposition}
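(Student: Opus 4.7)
The plan is to apply Schwarz's dimension--connectivity bound (Theorem 5 of \cite{Sch}, already invoked in the introduction) directly to the fibration $\Pi_r : E^I_B \to E^r_B$ whose sectional category equals $\TC_r[p: E\to B]$ by Definition \ref{def:main}. Two inputs are needed: the dimension of the base $E^r_B$ and the connectivity of the fiber of $\Pi_r$.

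First I would identify the dimension of the base. Since $p$ is locally trivial with fiber $X$, the $r$-fold fibered product $E^r_B$ projects to $B$ with fiber $X^r$ and inherits a CW structure satisfying
$$\dim E^r_B \le \dim B + r\dim X.$$
Next I would identify the homotopy type of the fiber of $\Pi_r$. Over a point $(e_1,\dots,e_r) \in E^r_B$ it consists of paths in a single fiber $X_b = p^{-1}(b)$ passing through the prescribed $e_i$ at the prescribed times $t_i$; up to homotopy this is the cartesian power $(\Omega X)^{r-1}$, as already noted in \S \ref{sec2}. Because $X$ is $k$-connected, $\Omega X$ is $(k-1)$-connected, and a product of $(k-1)$-connected spaces is again $(k-1)$-connected.

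Second, I would invoke Schwarz's theorem in precisely the form used in the introduction: a Hurewicz fibration over a CW base whose fiber is $(q-1)$-connected (with $q \ge 1$) satisfies $\secat \le \lceil (\dim B - q)/(q+1)\rceil$. Setting $q = k$ and substituting the base $E^r_B$ of dimension at most $r\dim X + \dim B$ gives
$$\TC_r[p: E\to B] = \secat(\Pi_r) \le \left\lceil \frac{r\dim X + \dim B - k}{k+1} \right\rceil,$$
which is the desired inequality.

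The verifications requiring care are the CW structure on $E^r_B$ with the expected dimension (immediate from local triviality and the CW hypotheses on $X$ and $B$) and the identification of $\Pi_r$ as a genuine Hurewicz fibration with fiber homotopy equivalent to $(\Omega X)^{r-1}$; the latter was already asserted in \S \ref{sec2} with a reference to \cite[Appendix]{CFW2}. Beyond these, the proof reduces to an arithmetic substitution into Schwarz's bound, so the principal subtlety is simply tracking the connectivity and dimension indices correctly.
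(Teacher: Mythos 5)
Your argument is correct and is the natural proof. The paper itself gives no proof of this proposition---it is stated as a reformulation of Proposition 6.1 of \cite{FP} and the reader is referred there---so there is no in-text argument to compare against. Your route (bound $\dim E^r_B\le\dim B+r\dim X$ via the fiber bundle $E^r_B\to B$ with fiber $X^r$, observe that the fiber $(\Omega X)^{r-1}$ of $\Pi_r$ is $(k-1)$-connected because $X$ is $k$-connected, and apply the Schwarz dimension--connectivity estimate to $\secat(\Pi_r)$) is the expected one and is sound; it also matches the $f=\mathrm{id}$, $d=0$ specialization of the paper's own dimension--connectivity bound for $\secat_f$ in Proposition \ref{lm:upper}, whose proof supplies the one technical point you flag, namely that $E^r_B$ need only have the homotopy type of a finite-dimensional CW-complex (via Lemma \ref{lm:hdim}) rather than be a simplicial complex for the obstruction-theoretic argument to go through.
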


We refer the reader to \cite{FP} for proofs and further detail.

\section{Relation with the equivariant sequential topological complexity}\label{sec3}

In this section we show that $\TC_r[p:E\to B]$ admits as an upper bound the sequential equivariant topological complexity \cite{CG, BS} of the fibre $X$. This leads to simple estimates in terms of the dimension of the fibre in the case when the structure group $G$ of the fibration acts freely on $X$, see Lemma \ref{lm:26} and Corollary \ref{cor:free}.

\subsection{Equivariant topological complexity} We shall recall a sequential analogue of the notion of equivariant topological complexity introduced by M. Bayeh and S. Sarkar \cite{BS}; it generalizes the concept of equivariant topological complexity originally introduced and studied by H. Colman and M. Grant \cite{CG}.

\subsection{} Let $G$ be a topological group acting on a topological space $X$ from the left. The papers \cite{CG, BS}
require $G$ to be compact but we do not impose this assumption at this stage.

The symbol $X^I$ denotes the space of all continuous paths $\gamma: I\to X$ where $I=[0,1]$ equipped with the compact-open topology. The group $G$ acts naturally on $X^I$, where $(g\gamma)(t) =g\gamma(t)$, for $t\in I$.

For an integer $r\ge 2$ and consider the Cartesian power $X^r= X\times X\times \dots \times X$ ($r$ times). We shall consider the diagonal action of $G$ on $X^r$.

Fix $r$ points $0 = t_1<t_2<\dots <t_r = 1$ in the unit interval $I=[0,1]$ and consider the evaluation map
\begin{eqnarray}\label{rhor}
\rho_r: X^I \to X^r,\end{eqnarray}
where $\rho_r(\gamma) = (\gamma(t_1), \dots, \gamma(t_r))$.
Clearly, $\rho_r$ a $G$-equivariant map.

\begin{definition}\label{def:tcr}
For a path-connected $G$-space $X$, we denote by $\TC_{r, G}(X)$ the smallest integer $k\ge 0$ such that the Cartesian power $X^r= X\times X\times \dots \times X$ ($r$ times) admits an open cover
$X^r=U_0\cup U_1\cup \dots \cup U_k$ with the following properties: each set $U_i$ is $G$-invariant and admits a continuous $G$-equivariant section $s_i: U_i\to X^I$ of the fibration $\rho_r$. If no such cover exists we set
$\TC_{r, G}(X)=\infty.$
\end{definition}

The invariant $\TC_{2, G}(X)$ coincides with the equivariant topological complexity $\TC_G(X)$ of Colman and Grant \cite{CG}.

It is obvious from Definition \ref{def:tcr} that
$$\TC_r(X) \le \TC_{r, G}(X),$$ where $\TC_r(X)$ is the sequential topological complexity of $X$ introduced by Rudyak
\cite{Ru}.

An alternative definition of $\TC_{r, G}(X)$ is obtained as follows (compare \cite{FP}, Lemma 3.5).
Let $K$ be a path-connected locally compact metrizable space and let $k_1, k_2, \dots, k_r\in K$ be a set of $r$ pairwise distinct points.
Consider the set $X^K$ of continuous maps $\alpha: K\to X$ equipped with compact-open topology. The evaluation map
$$\rho^K_r: X^K\to X^r,$$ where $\Pi^K_r(\alpha) = (\alpha(k_1), \dots, \alpha(k_r))\in X^r$, is continuous and $G$-equivariant,
where we view $X^r$ with the diagonal action of $G$.
\begin{lemma} For any path-connected locally compact metrizable space $K$,
the number $\TC_{r, G}(X)$ equals the smallest integer $k\ge 0$ such that the Cartesian power $X^r$
admits an open cover
$X^r=U_0\cup U_1\cup \dots \cup U_k$ with the following properties: each set $U_i$ is $G$-invariant and admits a continuous $G$-equivariant section $s_i: U_i\to X^K$ of $\rho^K_r$.
\end{lemma}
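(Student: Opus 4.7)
The plan is to prove both inequalities at once by constructing a pair of continuous $G$-equivariant maps between $X^I$ and $X^K$ that intertwine the two evaluation fibrations $\rho_r$ and $\rho_r^K$. The two key ingredients will be a path $\beta\colon I\to K$ passing through the prescribed points in the right order, and a continuous function $\eta\colon K\to I$ taking $k_i$ to $t_i$.

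I would produce $\beta$ as the concatenation of paths on the subintervals $[t_i,t_{i+1}]$ joining $k_i$ to $k_{i+1}$; this is possible because $K$ is path-connected. For $\eta$, metrizability makes $K$ normal and $\{k_1,\dots,k_r\}$ is a closed discrete subset, so Tietze's extension theorem furnishes a continuous $\eta\colon K\to I$ with $\eta(k_i)=t_i$. Precomposition with these maps yields
\[
\beta^*\colon X^K\to X^I,\quad \alpha\mapsto\alpha\circ\beta,\qquad \eta^*\colon X^I\to X^K,\quad \gamma\mapsto\gamma\circ\eta,
\]
which are continuous in the compact-open topology by the standard preimage-of-subbasic-sets argument. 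Both are $G$-equivariant, since $G$ acts on these mapping spaces pointwise in the target, which commutes with precomposition by a fixed map. A direct evaluation confirms the intertwining identities $\rho_r\circ\beta^*=\rho_r^K$ and $\rho_r^K\circ\eta^*=\rho_r$.

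With this in hand, both inequalities drop out. Given a $G$-invariant open cover $X^r=U_0\cup\dots\cup U_k$ equipped with $G$-equivariant sections $s_i\colon U_i\to X^K$ of $\rho_r^K$, the maps $\beta^*\circ s_i\colon U_i\to X^I$ are $G$-equivariant sections of $\rho_r$, whence $\TC_{r,G}(X)\le k$. Conversely, $G$-equivariant sections $\sigma_i\colon U_i\to X^I$ of $\rho_r$ compose with $\eta^*$ to give $G$-equivariant sections of $\rho_r^K$, yielding the reverse inequality.

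There is no genuine obstacle here; the substantive content is simply the existence of $\beta$ (by path-connectedness) and of $\eta$ (by Tietze, via metrizability). The local compactness hypothesis on $K$ is not actually used in this particular argument and is presumably stated for compatibility with later applications of the lemma where an exponential adjunction may enter.
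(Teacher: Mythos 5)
Your proof is correct and follows essentially the same approach as the paper: construct a pair of $G$-equivariant precomposition maps $\beta^*\colon X^K\to X^I$ and $\eta^*\colon X^I\to X^K$ over $X^r$ from a path $I\to K$ through the $k_i$ and a Tietze extension $K\to I$ sending $k_i\mapsto t_i$, and then transport sections in both directions. The only cosmetic difference is that you apply Tietze directly to get $\eta$, whereas the paper builds the analogous map from bump functions $\psi_j$; both give the same thing.
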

\begin{proof} Consider the commutative diagram
$$
\xymatrix{
X^I \ar@<1ex>[rr]^-{F'}  \ar[dr]_{\rho_r} && X^K \ar@<-1pt>[ll]^-{F}\ar[dl]^{\rho^K_r}\\
 &X^r
}
$$
where the maps $F: X^K\to X^I$ and $F': X^I\to X^K$ are defined as follows. Fix a path $\gamma: I\to K$ satisfying $\gamma(t_i)=k_i$ for all $i=1, \dots, r$. Then $F(\alpha)= \alpha\circ \gamma: I\to X$, where $\alpha\in X^K$.

To define the map $F': X^I\to X^K$ we first construct a continuous function $f: K\to I$ satisfying $f(k_i)=t_i$ for all $i=1, \dots, r$. Applying Tietze extension theorem we find continuous functions $\psi_j: K\to [0,1]$ with $\psi_j(t_i)=\delta_{ij}$ where
$j=1, \dots, r$. Then the function $f=\min\{1, \sum_{i=1}^r t_i\psi_r\}$, $f: K\to I$, has the required properties. The map $F': X^I\to X^K$ is defined by $F'(\alpha)=\alpha\circ f$ where $\alpha\in X^I$.

Clearly the maps $F$ and $F'$ are $G$-equivariant. For an open $G$-invariant subset $U\subset X^r$ any $G$-equivariant section
$s: U\to X^I$ of $\rho_r$ defines the $G$-equivariant section $s'=F'\circ s: U\to X^K$ of $\rho^K_r$. And vice versa, any
$G$-equivariant section $s': U\to X^K$ defines $s=F\circ s': U\to X^I$, an equivariant section of $\rho_r$.

This completes the proof.
\end{proof}

Yet another equivalent characterization of $\TC_{r, G}(X)$ is given by the following (see \cite{BS}):
\begin{lemma} \label{lm:diag}
For a $G$-space $X$ and $r\ge 2$ the integer $\TC_{r, G}(X)$ equals the smallest $k\ge 0$ such that $X^r$ admits an open cover $X^r=U_0\cup U_1\cup \dots\cup U_k$ by $G$-invariant open sets $U_i$ with the property that each inclusion $U_j\to X^r$ is $G$-homotopic to a map with values in the diagonal $X\subset X^r$.
\end{lemma}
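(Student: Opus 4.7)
The plan is to establish both directions of the equality by direct construction: from a $G$-equivariant section of $\rho_r$ one builds a $G$-homotopy of the inclusion $U_i \hookrightarrow X^r$ to a map with values in $\Delta(X)$, and conversely. The underlying observation is the symmetry between a path in $X$ hitting $r$ prescribed values at $t_1,\dots,t_r$ and a $G$-equivariant ``star'' of $r$ paths meeting at a common rendezvous point.

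For the forward direction, suppose $\TC_{r,G}(X)\le k$ is realised by a $G$-invariant open cover $\{U_i\}_{i=0}^k$ of $X^r$ together with $G$-equivariant sections $s_i:U_i\to X^I$ of $\rho_r$. I would define
\[
h_i : U_i \times I \to X^r, \qquad h_i(x,\tau) \;=\; \bigl(s_i(x)((1-\tau)t_1),\,\dots,\,s_i(x)((1-\tau)t_r)\bigr).
\]
At $\tau=0$ this recovers $\rho_r(s_i(x))=x$, the inclusion; at $\tau=1$ all coordinates equal $s_i(x)(0)$, so $h_i(\cdot,1)$ lands in the diagonal. Equivariance of $h_i$ is inherited from $s_i$, so each inclusion $U_i\hookrightarrow X^r$ is $G$-homotopic to a map into $\Delta(X)\subset X^r$, proving the inequality $\le$.

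For the converse, start with a $G$-invariant open cover $\{U_i\}_{i=0}^k$ together with $G$-homotopies $H^i:U_i\times I\to X^r$ with $H^i(x,0)=x$ and $H^i(x,1)=(d_i(x),\dots,d_i(x))$ for some continuous $G$-equivariant $d_i:U_i\to X$. Writing $H^i(x,\tau)=(H^i_1(x,\tau),\dots,H^i_r(x,\tau))$, each coordinate is a path from $x_j$ to the common endpoint $d_i(x)$, depending continuously and equivariantly on $x$. I would construct $s_i(x)\in X^I$ by stitching: on the first half of $[t_j,t_{j+1}]$ set $s_i(x)$ to a reparametrization of $H^i_j(x,\cdot)$ running from $x_j$ to $d_i(x)$; on the second half, traverse $H^i_{j+1}(x,\cdot)$ in reverse, from $d_i(x)$ to $x_{j+1}$. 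Since $t_1=0$ and $t_r=1$, this produces a full path on $I$ satisfying $s_i(x)(t_j)=x_j$ for every $j$, hence $\rho_r(s_i(x))=x$, and $G$-equivariance is inherited from $H^i$.

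The one technical subtlety, which I expect to be the only genuine piece of bookkeeping, is the piecewise definition in the converse direction: one must check continuity at each junction $t_j$ and at each interior midpoint $(t_j+t_{j+1})/2$. Both are automatic once one notices that $H^i_j(x,0)=x_j$ matches the preceding endpoint and that the rendezvous value $d_i(x)$ is shared by the two half-paths meeting at the midpoint. Beyond that, no genuine obstacle arises and the equivalence follows.
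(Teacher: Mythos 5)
The paper states this lemma with a citation to Bayeh and Sarkar \cite{BS} and does not include a proof of its own, so there is no internal argument to compare against. Your direct two-way construction is correct and is the standard route for equivalences of this type. In the forward direction, the homotopy $h_i(x,\tau)=\bigl(s_i(x)\bigl((1-\tau)t_j\bigr)\bigr)_{j=1}^{r}$ equals the inclusion at $\tau=0$ because $\rho_r\circ s_i$ is the identity, lands in the diagonal at $\tau=1$ since all coordinates become $s_i(x)(0)$, and is $G$-equivariant because $s_i$ is and $G$ acts diagonally on $X^r$. In the converse direction, the rendezvous map $d_i$ is automatically $G$-equivariant (evaluate the $G$-homotopy at $\tau=1$ and use that the diagonal embedding is equivariant); your stitched path is continuous at every node $t_j$, where both adjacent pieces take value $x_j$, and at every midpoint, where both pieces take value $d_i(x)$, depends jointly continuously on $(x,t)$, and inherits $G$-equivariance from each coordinate $H^i_j$. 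The only point worth flagging is harmless: you invoke $t_1=0$ and $t_r=1$ to get a path defined on all of $I$; this matches the paper's convention for $\rho_r$, and for a general choice $0\le t_1<\dots<t_r\le 1$ one would simply extend the stitched path by the constant values $x_1$ on $[0,t_1]$ and $x_r$ on $[t_r,1]$.
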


Now we can state our result relating the sequential parametrised topological complexity of a fibration with the equaivariant sequential topological complexity of the fibre:

\begin{theorem} \label{thm:equiv} Consider a locally trivial bundle $p: E\to B$ with path-connected fibre $X$ and structure group $G$. Let
$\tau: P\to B$ be a $G$-principal bundle such that $p: E\to B$ coincides with the associated bundle $p: E= X\times_GP = (X\times P)/G \to P/G=B$. Then the sequential parametrized topological complexity $\TC_r[p:E\to B]$ is bounded above by $\TC_{r, G}(X)$, i.e.
\begin{eqnarray}\label{upperbound}
\TC_r[p:E\to B]\, \le \, \TC_{r, G}(X).
\end{eqnarray}
\end{theorem}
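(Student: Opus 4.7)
The plan is to identify the relevant fibre products and path spaces as twisted products over the principal bundle $\tau: P \to B$, and then descend a $G$-equivariant cover of $X^r$ provided by $\TC_{r,G}(X)$ to a cover of $E^r_B$ with sections of $\Pi_r$.

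The first step is to establish the natural homeomorphisms
\begin{equation*}
E^r_B \;\cong\; X^r \times_G P \qquad\text{and}\qquad E^I_B \;\cong\; X^I \times_G P,
\end{equation*}
where $G$ acts diagonally on $X^r$ and pointwise on $X^I$ via $(g\gamma)(t) = g\gamma(t)$. In both cases the quotient maps from $X^r \times P$ and $X^I \times P$ are principal $G$-bundles, since $G$ acts freely on $P$, and are therefore open. Under these identifications the evaluation fibration $\Pi_r$ becomes the $G$-quotient of the $G$-equivariant map $\rho_r \times \Id_P : X^I \times P \to X^r \times P$.

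Granted this, let $k = \TC_{r,G}(X)$ and choose a $G$-invariant open cover $X^r = U_0 \cup \dots \cup U_k$ together with $G$-equivariant sections $s_i : U_i \to X^I$ of $\rho_r$. Setting $V_i = (U_i \times P)/G \subset E^r_B$ produces open sets covering $E^r_B$, and the assignment
\begin{equation*}
\hat{s}_i\bigl([(x_1, \dots, x_r), p]\bigr) \;=\; \bigl[s_i(x_1, \dots, x_r), p\bigr]
\end{equation*}
descends, by the universal property of the $G$-quotient and the equivariance of $s_i$, to a well-defined continuous section $\hat{s}_i: V_i \to E^I_B$ of $\Pi_r$. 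This yields the desired inequality $\TC_r[p:E\to B] \le k$.

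The main obstacle I expect is in the first step: verifying carefully that the compact-open topology on $X^I$ is compatible with the twisted product, so that $E^I_B$ is genuinely homeomorphic to $X^I \times_G P$ and so that $\Pi_r$ corresponds exactly to the induced quotient map. Local triviality of $\tau: P \to B$ reduces this to a local check on trivialising charts, but some care is required with the path-space topology. Once this identification is in place, the descent of $G$-equivariant sections to ordinary sections is a formal consequence of the principal $G$-bundle structure of the quotient maps.
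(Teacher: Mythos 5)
Your proposal follows essentially the same argument as the paper: identify $E^I_B\cong X^I\times_G P$ and $E^r_B\cong X^r\times_G P$ so that $\Pi_r$ becomes $\rho_r\times_G 1$, then pass the $G$-invariant cover and $G$-equivariant sections witnessing $\TC_{r,G}(X)=k$ down to the orbit spaces. The only difference is that you flag the compact-open topology compatibility for the path-space identification as a point requiring care, whereas the paper takes the homeomorphisms for granted; this is a reasonable concern but a standard verification.
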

Note that the RHS of inequality (\ref{upperbound}) depends only on the fibre $X$ viewed as a $G$-space where
$G$ is the structure group of the bundle.
\begin{proof} First we note that there exists the commutative diagram
$$\xymatrix{
X^I\times_G P\ar[r]^{{} \hskip0.5cm \alpha}\ar[d]_{\rho_r\times_G1}& E^I_B\ar[d]^{\Pi_r}
\\
X^r\times_G P \ar[r]_{\hskip0.4cm \beta}& E^r_B
}$$
where $\alpha$ and $\beta$ are homeomorphisms. Therefore,
$$\TC_r[p:E\to B] = \secat[\Pi_r: E^I_B \to E^r_B]= \secat[\rho_r\times_G1: X^I\times_G P \to X^r\times_G P].$$

For $k=\TC_{r, G}(X)$ let
$X^r=U_0\cup U_1\cup \dots \cup U_k$ be an open cover as in Definition \ref{def:tcr}.
Consider the sets $$W_i= (U_i\times P)/G\subset (X^r\times P)/G.$$
They are open and cover $(X^r\times P)/G$. Any $G$-equivariant section $s_i: U_i\to X^I$ of the fibration $\rho_r$
obviously defines the section
$\sigma_i: W_i \to (X^I \times P)/G$ of the orbit spaces; here $\sigma_i$ is the map induced by $s_i\times 1_P$ on the spaces of orbits. This shows that
$$\TC_r[p:E\to B] = \secat[\rho_r\times_G1: X^I\times_G P \to X^r\times_G P]\le k.$$
\end{proof}
As mentioned in \cite{CG, BS}, in some cases the number $\TC_{r, G}(X)$ is infinite. In particular, one has $\TC_{r, G}(X)=\infty$ if for a subgroup $H\subset G$ the fixed-point set $X^H$ is not path-connected. In such situations the upper bound (\ref{upperbound}) becomes meaningless. We discuss below situations when the number $\TC_{r, G}(X)$ is finite and admits useful upper bounds.

The following Lemma uses the notion of $G$-equivariant homotopy lifting property ($G$-HLP) applied to a map $q: X\to X/G$. This property
means that the commutative diagram
$$
\xymatrix{
Y\ar[r]^f \ar[d]_{\rm {inc}} &X\ar[d]_q\\
Y\times I\ar[r]_F& X/G,
}
$$
where $X$ and $Y$ are separable metric spaces and the map $f: Y\to X$ is $G$-equivariant, can be completed by a $G$-equivariant map
$H: Y\times I\to X$ extending $f$ and such that $q\circ H=F$. A theorem of R. Palais (see \cite{Bre}, Theorem II.7.3) states that
this property is automatically satisfied for free actions of compact Lie groups.

\begin{lemma} \label{lm:26} Consider a locally trivial bundle $p: E\to B$ with fibre $X$ (a path-connected separable metric space) and structure group $G$.
Assume that the group $G$ acts freely on $X$ and, moreover, that the quotient map $q_r: X^r\to X^r/G$ possesses the
$G$-HLP. Then
$$\TC_r[p:E\to B]\, \le \, \TC_{r, G}(X)\le \cat(X^r/G)\le \dim(X^r/G).$$
\end{lemma}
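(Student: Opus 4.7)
The plan is to break this chain of inequalities into its four links, observe that the outermost two require no new work, and then focus the argument on the middle inequality $\TC_{r,G}(X) \le \cat(X^r/G)$. The leftmost inequality $\TC_r[p:E\to B] \le \TC_{r,G}(X)$ is exactly the conclusion of Theorem \ref{thm:equiv} proved in the previous subsection, so I would simply cite it. The rightmost inequality $\cat(X^r/G) \le \dim(X^r/G)$ is the classical dimensional upper bound for Lusternik--Schnirelmann category; it applies here because $X^r/G$ is path-connected (inherited from $X$) and paracompact.

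For the middle inequality I would set $k = \cat(X^r/G)$, choose an open cover $X^r/G = V_0 \cup \cdots \cup V_k$ with each $V_i$ contractible in $X^r/G$, and pull it back along $q_r$ to obtain the $G$-invariant open cover $U_i = q_r^{-1}(V_i)$ of $X^r$. The characterization of $\TC_{r,G}(X)$ in Lemma \ref{lm:diag} then reduces everything to producing, for each $i$, a $G$-equivariant homotopy from the inclusion $U_i \hookrightarrow X^r$ to a map whose image lies in the diagonal $\Delta(X) \subset X^r$. Since the $G$-equivariant diagonal $\Delta: X \to X^r$ induces a continuous map $X/G \to X^r/G$ with image $D$, and since $X^r/G$ is path-connected, I would modify the null-homotopy of $V_i$ by concatenating with a path so that it ends at a chosen point $v_i = [(x_i,\ldots,x_i)] \in D$. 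This yields a homotopy $F_i: U_i \times I \to X^r/G$ from $q_r|_{U_i}$ to the constant map at $v_i$; the assumed $G$-HLP of $q_r$ then provides a $G$-equivariant lift $H_i: U_i \times I \to X^r$ starting at the inclusion, whose time-$1$ image lies in $q_r^{-1}(v_i) = G\cdot(x_i,\ldots,x_i) \subset \Delta(X)$, exactly as required.

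The main obstacle I anticipate is precisely this last step: guaranteeing that the lifted homotopy terminates in the diagonal rather than in some arbitrary $G$-orbit of $X^r$. The freeness of the action ensures that $q_r^{-1}(v_i)$ is a single orbit, path-connectedness of $X^r/G$ lets me steer the endpoint of the null-homotopy into the distinguished subspace $D$ so that this orbit automatically sits inside $\Delta(X)$, and the $G$-HLP hypothesis carries out the heavy lifting of propagating the downstairs null-homotopy to an upstairs equivariant one.
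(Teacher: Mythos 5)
Your proposal is correct and follows essentially the same route as the paper: cite Theorem \ref{thm:equiv} for the left inequality, use the classical $\cat\le\dim$ bound for the right inequality, and for the middle inequality pull back a categorical cover along $q_r$, steer the null-homotopies to land in the image $X/G\subset X^r/G$ of the diagonal, lift via $G$-HLP, and conclude with Lemma \ref{lm:diag}. One small aside to correct: you write that freeness ensures $q_r^{-1}(v_i)$ is a single orbit, but this is automatic for any quotient map; the orbit $G\cdot(x_i,\dots,x_i)$ lies in $\Delta(X)$ regardless of freeness, and freeness plays no role in the argument beyond motivating the $G$-HLP hypothesis (via Palais's theorem).
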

\begin{proof} In view of Theorem \ref{thm:equiv} we only need to prove the inequality
$\TC_{r, G}(X)\le \cat(X^r/G)$. Consider an open covering $X^r/G = V_0\cup V_1\cup \dots\cup V_k$, where
$k=\cat(X^r/G)$ and each inclusion $U_i\subset X^r/G$ is homotopic to the constant map into a point
$x_0\in X/G\subset X^r/G$; here
$X\subset X^r$ is the diagonal. By our assumption the projection $q_r:X^r\to X^r/G$ has
 the $G$-homotopy lifting property.
The sets $U_i=q_r^{-1}(V_i)\subset X^r$ are $G$-invariant, where $i=0, 1, \dots, k$, and applying the $G$-homotopy lifting property to the homotopy of $V_i$ to $x_0$ we find a homotopy $h_t^i: U_i\to X^r$ (where $t\in [0,1]$ and $i\in \{0,1, \dots, r\}$) such that
$h_0^i$ is the inclusion  $U_i\to X^r$, each map $h^i_t$ is $G$-equivariant and $h_1^i(U_i)\subset X\subset X^r$.
Applying Lemma \ref{lm:diag} we obtain $\TC_{r, G}(X)\le k$.
\end{proof}


\begin{corollary}\label{cor:free}
Consider a locally trivial bundle $p: E\to B$ with fibre $X$ (which is a path-connected separable metric space) and a compact Lie group $G$ acting freely on $X$, as the structure group. Then for any $r\ge 2$ one has
\begin{eqnarray}
\TC_{r}[p: E\to B] \, \le \, \cat(X^r/G) \, \le\, r\dim X - \dim G.
\end{eqnarray}
\end{corollary}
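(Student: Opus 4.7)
The plan is to obtain the corollary by assembling two ingredients already close at hand: Lemma~\ref{lm:26} and the classical upper bound $\cat(Y)\le\dim Y$ for nice spaces $Y$. The first inequality $\TC_r[p:E\to B]\le \cat(X^r/G)$ will come straight from Lemma~\ref{lm:26} once the $G$-HLP hypothesis is verified, while the second inequality $\cat(X^r/G)\le r\dim X-\dim G$ is a dimension calculation based on the fact that $q_r: X^r\to X^r/G$ is a principal $G$-bundle.

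First, I would check that the diagonal action of $G$ on $X^r$ is free. If $g\in G$ satisfies $g(x_1,\dots,x_r)=(x_1,\dots,x_r)$, then $gx_i=x_i$ for each $i$, and freeness of $G\curvearrowright X$ forces $g=e$. Hence the orbit projection $q_r: X^r\to X^r/G$ is the quotient map of a free action of a compact Lie group on a separable metric space. The theorem of Palais recalled just before Lemma~\ref{lm:26} guarantees that such a quotient map enjoys the $G$-equivariant homotopy lifting property. Therefore Lemma~\ref{lm:26} applies and yields
$$\TC_r[p:E\to B]\,\le\,\TC_{r,G}(X)\,\le\,\cat(X^r/G).$$

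Second, I would invoke the standard inequality $\cat(Y)\le \dim Y$, valid for separable metric ANRs. Since the free action of the compact Lie group $G$ on $X^r$ makes $q_r: X^r\to X^r/G$ into a principal $G$-bundle, locally $X^r$ is a product $(X^r/G)\times G$, giving the additive formula
$$\dim(X^r/G)\,=\,\dim X^r-\dim G\,=\,r\dim X-\dim G.$$
Concatenating this with the first estimate produces the stated bound.

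The argument is essentially a one-step assembly, so there is no substantive obstacle; the only point requiring a word of care is ensuring that $X^r/G$ has the regularity needed to apply both Lemma~\ref{lm:26} and the bound $\cat\le\dim$. This is automatic here: the proper free action of the compact Lie group $G$ on the separable metric space $X^r$ makes $X^r/G$ a separable metric ANR (in fact a manifold when $X$ is), so both tools are legitimately available.
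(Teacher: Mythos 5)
Your argument follows the paper's own proof almost verbatim: invoke Palais' theorem to obtain the $G$-HLP, apply Lemma~\ref{lm:26}, and then estimate $\dim(X^r/G)$. One small inaccuracy: you write $\dim X^r = r\dim X$, but for general separable metric spaces only the subadditivity $\dim X^r \le r\dim X$ is guaranteed (equality can fail, as Pontryagin's classical example shows), which is why the paper writes $\dim(X^r/G) = \dim X^r - \dim G \le r\dim X - \dim G$; since only the inequality is needed, your final bound is unaffected.
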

\begin{proof} First we note that due to the theorem of Palais (\cite{Bre}, Theorem II.7.3) the assumptions of Lemma \ref{lm:26} are satisfied. We are only left to note that $\dim X^r/G= \dim X^r-\dim G\le r\dim X -\dim G$, see \cite{Pal}, Corollary 1.7.32.
\end{proof}

One can use Lemma \ref{lm:26}  to
give an alternative proof of Proposition 3.3
from \cite{FP} (see also \cite{CFW}, Proposition 4.3) with some minor additional assumptions:

\begin{corollary}
Let $G \to P \stackrel{\tau}{\to} B$ be a principal bundle, where $G$ is a path-connected topological group which has the topology of a separable metric space. Then
$$\TC_r[\tau\colon P \to B] = \cat(G^{r-1}) \quad \mbox{for any}\quad r\ge 2.$$
\end{corollary}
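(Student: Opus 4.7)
My plan is to sandwich $\TC_r[\tau\colon P\to B]$ between two copies of $\cat(G^{r-1})$: I will obtain the upper bound from the equivariant machinery of Lemma~\ref{lm:26}, and the lower bound from restriction to a single fibre combined with the classical identity $\TC_r(G)=\cat(G^{r-1})$ for topological groups.

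For the upper bound, I will regard $\tau\colon P\to B$ as associated (to itself) with fibre $X=G$ and structure group $G$ acting by left translation, and apply Theorem~\ref{thm:equiv} followed by Lemma~\ref{lm:26}. The action is free, so the only hypothesis to check is that the quotient map $q_r\colon G^r\to G^r/G$ (with diagonal left action) satisfies the $G$-HLP. The $G$-equivariant homeomorphism
\[
G^r \;\longrightarrow\; G\times G^{r-1},\qquad (g_1,g_2,\dots,g_r)\;\longmapsto\;\bigl(g_1,\,g_1^{-1}g_2,\,\dots,\,g_1^{-1}g_r\bigr),
\]
where $G$ acts on $G\times G^{r-1}$ by left translation on the first factor only, identifies $q_r$ with the trivial principal-bundle projection $G\times G^{r-1}\to G^{r-1}$, for which $G$-HLP is immediate by lifting through the product decomposition. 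Lemma~\ref{lm:26} will then give
\[
\TC_r[\tau\colon P\to B]\;\le\;\TC_{r,G}(G)\;\le\;\cat(G^r/G)\;=\;\cat(G^{r-1}).
\]

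For the lower bound, I will pick a basepoint $b\in B$, identify the fibre $\tau^{-1}(b)$ with $G$, and observe that any open cover $\{U_0,\dots,U_k\}$ of $P^r_B$ witnessing $\TC_r[\tau]$, together with its local sections of $\Pi_r$, restricts to an open cover of $G^r\subset P^r_B$ carrying sections of $\rho_r\colon G^I\to G^r$ (since the constant-base condition forces each lifted path to remain in the fibre). Hence $\TC_r(G)\le \TC_r[\tau\colon P\to B]$. Combining this with the classical formula $\TC_r(G)=\cat(G^{r-1})$ for path-connected topological groups (a sequential analogue, due to Rudyak \cite{Ru}, of Farber's identity $\TC(G)=\cat(G)$) closes the sandwich. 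The only real subtlety is handling the $G$-HLP without the compact-Lie assumption that would let Palais's theorem apply automatically; the explicit trivialization of $q_r$ above is precisely what makes the argument go through in the stated generality.
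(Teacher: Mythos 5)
Your proof is correct and follows essentially the same route as the paper's: both establish the upper bound by verifying the $G$-HLP for $q_r\colon G^r\to G^r/G$ via the explicit left-translation trivialization (your map $(g_1,\dots,g_r)\mapsto(g_1,g_1^{-1}g_2,\dots,g_1^{-1}g_r)$ is exactly the inverse of the section $s$ the paper exhibits) and then invoke Lemma~\ref{lm:26}, and both close the sandwich with $\TC_r(G)=\cat(G^{r-1})$. The only cosmetic difference is that the paper cites \S3 of \cite{FP} for the lower bound $\TC_r[\tau]\ge\TC_r(G)$, whereas you derive it directly by restricting sections of $\Pi_r$ to the fibre over a point.
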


\begin{proof}
By \S 3 of \cite{FP} we know that
$\TC_r[\tau\colon P\to B] \ge \TC_r(G) = \cat(G^{r-1})$.
We view the fibre $G$ as acting on itself by left translations and acting diagonally on $G^r$.
The quotient map $q_r: G^r\to G^r/G$ admits a section
$s: G^r/G \to G^r$, given by $s(g_1, g_2, \dots, g_r) = (e, g_1^{-1}g_2, g_1^{-1}g_3, \dots g_1^{-1}g_r)$. Therefore,
we explicitly obtain a $G$-homeomorphism $G^r \cong G^r/G \times G$ so that   
$q_r$ is a trivial bundle $G^r/G \times G \to G^r/G$ and, as such, has the $G$-HLP.
Lemma \ref{lm:26} then applies and gives the upper bound
$\TC_r[\tau\colon P\to B] \leq \cat(G^r/G)=\cat(G^{r-1})$. Comparing, we see that both
bounds are in fact equalities.
\end{proof}

%
%
%
%
%
%
%
%

\section{Calculus of sectional categories}\label{sec4}

In this section we introduce a new invariant $\secat_{f}[p:E\to B]$ which generalises the concept of sectional category of a fibration.
This invariant plays a role in estimating sectional category of towers of fibrations, see Theorem \ref{lm:tower}.

\subsection{} Let $p: E\to B$ be a fibration and let $f: B\to C$ be a continuous map.
\begin{definition}\label{def1} We define the invariant
$$\secat_{f}[p:E\to B]$$ to be the smallest integer $k\ge 0$ such that $C$ admits a family of open subsets $U_0, U_1, \dots, U_k$ with the following properties:

(a) $U_0\cup U_1\cup \dots\cup U_k \supset f(B)$ or, equivalently,  $B = \cup_{i=1}^k f^{-1}(U_i)$;

(b) the fibration $p:E\to B$ admits a continuous section over each open set $f^{-1}(U_i)$, where $i=0, 1, \dots, k$.

We set
$\secat_{f}[p:E\to B]=\infty$ if no such family exists.
\end{definition}

Open sets of the form $f^{-1}(U)\subset B$ where $U\subset C$, can be called {\it $f$-saturated}. Definition \ref{def1} can be rephrased as
dealing with covers of the base $B$ by $f$-saturated open sets admitting continuous section of the fibration $p:E\to B$.

%
%

\subsection{Finiteness} The following Lemma summarises information about finiteness of  the invariant $\secat_{f}[p:E\to B]$.

\begin{lemma} Let $p: E\to B$ be a fibration and let $f: B\to C$ be a continuous map.
\begin{enumerate}
\item[(A)] If for some $x\in f(B)\subset C$ one has $\secat[p: p^{-1}f^{-1}(x)\to f^{-1}(x)]>0$ then $\secat_{f}[p:E\to B]=\infty$.
\item[(B)] If $B$ is compact and every point $x\in f(B)\subset C$ has an open neighbourhood $U\subset C$ such that
$\secat[p: p^{-1}f^{-1}(U)\to f^{-1}(U)]=0$ then $\secat_{f}[p:E\to B]<\infty$.
\end{enumerate}
\end{lemma}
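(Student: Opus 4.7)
For part (A), the plan is a direct contradiction argument. Suppose $\secat_f[p:E\to B]=k<\infty$, witnessed by open sets $U_0,\dots,U_k\subset C$ covering $f(B)$ together with sections $s_i$ of $p$ over each $f^{-1}(U_i)$. Since $x\in f(B)$, some $U_j$ contains $x$, so $f^{-1}(x)\subset f^{-1}(U_j)$, and the restriction $s_j|_{f^{-1}(x)}$ is a section of $p$ over all of $f^{-1}(x)$. This forces $\secat[p:p^{-1}f^{-1}(x)\to f^{-1}(x)]=0$, contradicting the hypothesis. The conceptual point is that $f$-saturated open neighbourhoods of $x$ automatically swallow the whole fibre $f^{-1}(x)$, so no ``local'' obstruction to sectioning along $f^{-1}(x)$ can be dodged.

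For part (B), the approach is compactness. By hypothesis, each $x\in f(B)$ has a neighbourhood $U_x\subset C$ over which $p$ admits a section $\sigma_x$ defined on $f^{-1}(U_x)$. Since $B$ is compact and $f$ continuous, $f(B)\subset C$ is compact, so the cover $\{U_x\}_{x\in f(B)}$ admits a finite subcover $U_{x_0},\dots,U_{x_k}$. These $k+1$ open sets of $C$ together with the sections $\sigma_{x_0},\dots,\sigma_{x_k}$ witness $\secat_f[p:E\to B]\le k<\infty$.

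For part (C), the plan is to deduce the hypothesis of (B) from the pointwise hypothesis by promoting a section over a single fibre to one over a small neighbourhood. Fix $x\in f(B)$ and let $\sigma_0:f^{-1}(x)\to p^{-1}(f^{-1}(x))$ be a section, provided by the assumption $\secat[p:p^{-1}f^{-1}(x)\to f^{-1}(x)]=0$. Using local triviality of $f$, choose a contractible neighbourhood $U$ of $x$ in $C$ with a trivialisation $\phi:f^{-1}(U)\cong U\times f^{-1}(x)$. A contraction of $U$ onto $x$, transported through $\phi$, produces a deformation retraction $H:f^{-1}(U)\times I\to f^{-1}(U)$ with $H_0=\Id$ and $H_1$ landing in $f^{-1}(x)$. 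Then $\sigma_0\circ H_1:f^{-1}(U)\to E$ is a lift of $H_1:f^{-1}(U)\to B$, and feeding the reversed homotopy $t\mapsto H_{1-t}$ into the homotopy lifting property of $p$ yields a lift $\tilde{H}$ whose terminal value $\tilde{H}_1$ satisfies $p\circ\tilde{H}_1=H_0=\Id$, i.e.\ is a continuous section of $p$ over $f^{-1}(U)$. Part (B) then applies.

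The main obstacle lies in (C), specifically the passage from a section defined over a single fibre to one defined over an entire neighbourhood. The HLP argument above implicitly requires local trivialisations of $f$ over contractible open sets; this is routine in the standard settings (CW-complexes, manifolds), but may warrant an explicit regularity assumption on $C$ if one wishes to argue in full generality. Parts (A) and (B) are essentially formal and should present no real difficulty.
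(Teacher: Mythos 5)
Your proof is correct and takes essentially the paper's approach, except that the paper states only the key observation for (A) (namely, that no open set $U\ni x$ in $C$ can have $f^{-1}(U)$ admitting a section, for otherwise restricting to $f^{-1}(x)$ would give a section there) and dismisses (B) and (C) as obvious, whereas you supply the full details. Your argument for (A) is exactly the paper's observation unpacked, and your compactness argument for (B) is the intended one. For (C), you are right to flag the subtlety: your reduction to (B) needs not only local triviality of $f$ but a \emph{contractible} trivialising neighbourhood $U$ of $x$, so that $f^{-1}(x)\hookrightarrow f^{-1}(U)\cong U\times f^{-1}(x)$ is a deformation retract and the HLP of $p$ can push the section from the slice to the whole tube; this requires $C$ to be locally contractible near $f(B)$, a mild regularity hypothesis which is implicit in the paper's ambient setting (separable metric ANRs, CW-complexes) but not stated in the lemma itself. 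That caveat is the only genuine gap, and you have already identified it; everything else is sound.
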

\begin{proof} Under the assumption (A) there is no open set $U\subset C$ containing $x$ with $f^{-1}(U)$ having a continuous section. The statement (B) is obvious. 
\end{proof}

In our applications we shall typically have the map $f: B\to C$ surjective, and more specifically, it will often be the quotient map with respect to a group action. However, it is convenient to make no additional assumptions at this stage.

\subsection{Dependence on $f$}
In the special case when the map $f: B\to C=B$ is the identity map, the number $\secat_{f}[p:E\to B]$ turns into the usual sectional category $\secat[p: E\to B]$.  In general, obviously,
\begin{eqnarray}\label{comp1}
\secat[p:E\to B] \le \secat_{f}[p:E\to B]\end{eqnarray}
and
\begin{eqnarray}\label{comp2}
\secat[p:E\to B] = \secat_{f}[p:E\to B]\end{eqnarray}
assuming that $\secat[p:E\to B]=0$.

Moreover, for $B\stackrel f\to  C\stackrel g \to C'$ one clearly has
\begin{eqnarray}\label{compos}
\secat_{f}[p:E\to B]\le \secat_{gf}[p:E\to B].
\end{eqnarray}

%

\begin{lemma}\label{lm:homeo}
Let $p: E\to B$ be a fibration and let $f: B\to C$ and $f': B\to C'$ be two continuous maps. (a) If there is a continuous map
$h: C\to C'$ such that $f'= h\circ f$ then
$$
\secat_f[p:E\to B] \le \secat_{f'}[p: E\to B].
$$
Moreover, (b) If $h: C\to C'$ as above is such that its restriction induces a homeomorphism $f(B)\to f'(B)$ then
$$
\secat_f[p: E\to B] = \secat_{f'}[p: E\to B].
$$
\end{lemma}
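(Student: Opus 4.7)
The plan is to prove (a) by pulling back an $f'$-saturated cover of $f'(B)$ through $h$ to get an $f$-saturated cover of $f(B)$, and to prove (b) by exploiting the homeomorphism $h|_{f(B)}\colon f(B)\to f'(B)$ to push $f$-saturated open sets forward into $C'$, after which the reverse inequality combined with (a) yields equality.

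For part (a), suppose $\secat_{f'}[p\colon E\to B]=k$ and let $U'_0,\dots,U'_k\subset C'$ be open sets covering $f'(B)$ with the property that $p$ admits a section $s_i$ over each $f'^{-1}(U'_i)$. I would set $U_i := h^{-1}(U'_i)\subset C$. These are open by continuity of $h$, and they cover $f(B)$: indeed, if $y=f(b)\in f(B)$, then $h(y)=f'(b)\in f'(B)$ lies in some $U'_i$, so $y\in U_i$. Moreover, because $f'=h\circ f$,
\begin{equation*}
 f^{-1}(U_i)=f^{-1}(h^{-1}(U'_i))=(h\circ f)^{-1}(U'_i)=f'^{-1}(U'_i),
\end{equation*}
so each $s_i$ is a section of $p$ over $f^{-1}(U_i)$. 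This shows $\secat_f[p\colon E\to B]\le k$.

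For part (b), in view of (a) I only need the reverse inequality. Suppose $\secat_f[p\colon E\to B]=k$, with open cover $U_0,\dots,U_k\subset C$ of $f(B)$ and sections $s_i$ over $f^{-1}(U_i)$. Set $V_i=U_i\cap f(B)$, open in the subspace topology on $f(B)$. Since $h|_{f(B)}\colon f(B)\to f'(B)$ is a homeomorphism, the images $h(V_i)$ are open in $f'(B)$; by definition of the subspace topology on $f'(B)\subset C'$, I can choose open sets $U'_i\subset C'$ with $U'_i\cap f'(B)=h(V_i)$. Clearly $\bigcup_i U'_i\supset\bigcup_i h(V_i)=h(f(B))=f'(B)$. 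It remains to verify $f'^{-1}(U'_i)=f^{-1}(U_i)$: for $b\in B$, since $f'(b)=h(f(b))\in f'(B)$, we have $f'(b)\in U'_i$ iff $h(f(b))\in h(V_i)$, and by injectivity of $h|_{f(B)}$ this is equivalent to $f(b)\in V_i$, i.e. $f(b)\in U_i$. Hence each $s_i$ is a section of $p$ over $f'^{-1}(U'_i)$, giving $\secat_{f'}[p\colon E\to B]\le k$.

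The only subtle point is the step in (b) that extends the open set $h(V_i)\subset f'(B)$ to an open set $U'_i\subset C'$; this is automatic from the definition of the subspace topology but must be invoked carefully so that $U'_i\cap f'(B)=h(V_i)$ exactly, which is what makes the identification $f'^{-1}(U'_i)=f^{-1}(U_i)$ go through. The injectivity of $h$ on $f(B)$ (part of the homeomorphism hypothesis) is what prevents $U'_i$ from inadvertently pulling back extra points of $B$ via $f'$.
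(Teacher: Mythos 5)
Your proof is correct and follows essentially the same route as the paper: part (a) is the composition inequality $\secat_f \le \secat_{h\circ f}$, which you prove directly rather than citing, and part (b) uses the identical construction of pushing $U_i\cap f(B)$ through $h$ and extending to an open set in $C'$ whose $f'$-preimage equals $f^{-1}(U_i)$. Your careful verification of $f'^{-1}(U'_i)=f^{-1}(U_i)$ via injectivity of $h|_{f(B)}$ spells out a step the paper leaves implicit, but the argument is the same.
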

\begin{proof}
The statement (a) follows from inequality (\ref{compos}). To prove (b) assume that $U\subset C$ is an open subset with the property that $f^{-1}(U)$ admits a section of $p$. Then $$h(U\cap f(B))=U'\subset f'(B)$$ is an open subset of $f'(B)$ and hence there exists an open subset $V\subset C'$ with $V\cap f'(B)=U'$. Then, $f'^{-1}(V)= f^{-1}(U)$ admits a section of $p$.
Thus any family of open sets
$U_0\cup U_1\cup \dots\cup U_k\supset f(B)$ such that $f^{-1}(U_i)$ admits a section of $p$  determines a family of open subsets of the same cardinality, $V_0\cup V_1\cup \dots\cup V_k\supset f'(B)$ with the preimages $f'^{-1}(V_j)$ admitting sections of $p$. This shows the inverse inequality
$\secat_f[p: E\to B] \ge  \secat_{f'}[p: E\to B]$.
\end{proof}

\subsection{Induced fibrations}
\begin{lemma}\label{lm:map2}
Assume that a fibration $p:E'\to B'$ is induced from the fibration $p:E\to B$ via the map
$\alpha: B'\to B$ as shown on the diagram
$$\xymatrix{
E'\ar[r]^{\beta}\ar[d]_{p'}& E\ar[d]^{p}&
\\
B' \ar[r]_{\alpha}& B\ar[r]_f &C.
}$$
For $f: B\to C$ set
$f'=f\circ \alpha$. Then $$ \secat_{f'}[p':E'\to B'] \le \secat_f[p:E\to B].$$
\end{lemma}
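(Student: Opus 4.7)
The plan is to take a cover witnessing $\secat_f[p:E\to B]$ and pull back both the cover and the sections along the induced-fibration diagram. Concretely, I would let $k = \secat_f[p:E\to B]$ (assuming it is finite; otherwise there is nothing to prove) and pick open sets $U_0, U_1, \dots, U_k \subset C$ with $f(B) \subset \bigcup_{i=0}^{k} U_i$ such that for each $i$ the fibration $p: E\to B$ admits a continuous section $s_i : f^{-1}(U_i) \to E$.

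First I would verify condition (a) of Definition \ref{def1} for the map $f'$. Since $f' = f\circ \alpha$, we have $f'(B') = f(\alpha(B')) \subset f(B) \subset \bigcup_{i=0}^{k} U_i$, so the same open sets $U_i$ cover $f'(B')$. Moreover, $f'^{-1}(U_i) = \alpha^{-1}(f^{-1}(U_i))$, so these are exactly the open subsets of $B'$ that need to admit sections of $p'$.

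Next I would construct the required sections. Realising the induced fibration as the pullback
$$E' = \{(b', e) \in B'\times E \,:\, \alpha(b') = p(e)\},$$
with $p'(b', e) = b'$ and $\beta(b', e) = e$, I would define, for each $i$,
$$s'_i : f'^{-1}(U_i) \longrightarrow E', \qquad s'_i(b') = \bigl(b',\, s_i(\alpha(b'))\bigr).$$
This is well-defined because $\alpha(b') \in f^{-1}(U_i)$ whenever $b' \in f'^{-1}(U_i)$, and $p(s_i(\alpha(b'))) = \alpha(b')$ by the section property, so $(b', s_i(\alpha(b')))$ lies in the pullback. Continuity is immediate from continuity of $\alpha$ and $s_i$, and $p'\circ s'_i = \mathrm{id}$ by construction. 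Hence the family $\{U_0, \dots, U_k\}$ witnesses $\secat_{f'}[p':E'\to B'] \le k$, which is the desired inequality.

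There is no real obstacle here: the argument is essentially the universal property of pullbacks applied to sections, and the statement is a direct naturality property of $\secat_f$ under base change. The only place one must be careful is the inclusion $f'(B') \subset f(B)$, which ensures the same cover of $f(B)$ automatically covers $f'(B')$ without needing to enlarge the index set.
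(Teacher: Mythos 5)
Your proof is correct and follows essentially the same route as the paper's: take the witnessing cover $U_0,\dots,U_k$ for $\secat_f[p:E\to B]$, note that $f'(B')\subset f(B)$ so the same cover works, and build sections of $p'$ over $f'^{-1}(U_i)=\alpha^{-1}(f^{-1}(U_i))$ from the pullback property. The only stylistic difference is that you write down the pullback model and the section formula $s'_i(b')=(b',s_i(\alpha(b')))$ explicitly, whereas the paper invokes the universal property abstractly; the content is the same.
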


\begin{proof}
Assuming that there is a continuous section $s\colon f^{-1}(U) \to E$ of $p\colon E\to B$, for $U\subset C$ open,
define $\phi\colon f'^{-1}(U) \to E$ by $\phi=s\circ \alpha$.
Then we have $p\circ \phi = \alpha$ and by the pullback property there is a
continuous map $s': f'^{-1}(U) \to E'$ with $p'\circ s'={\rm inclusion}$, i.e. $s'$ is a section of $p'$. Since $f'(B)\subset f(B)\subset C$, we see that the statement of the Lemma follows.
\end{proof}

%

\begin{lemma}[Maps of fibrations]\label{lm:map1}
If for two fibrations $p:E\to B$ and $p':E'\to B$ over the same base
$B$ there exists a map $\phi:E\to E'$ such that
the diagram 
\begin{center}
$\xymatrix{
 E  \ar[dr]_{ p} \ar[rr]^{\phi}  &&  E' \ar[dl]^{p'} \\
& B
}$
\end{center}
commutes up to homotopy
then $\secat_f[p':E'\to B]\le \secat_f[p:E\to B]$.
\end{lemma}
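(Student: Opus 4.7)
The plan is to transport sections of $p$ directly to sections of $p'$ via the map $\phi$, using the relation $p'\circ \phi = p$ guaranteed by commutativity of the triangle. The proof will be essentially a one-line composition argument; the only content is verifying that the same family of open sets in $C$ that witnesses $\secat_f[p:E\to B]\le k$ also witnesses $\secat_f[p':E'\to B]\le k$.

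More concretely, let $k=\secat_f[p:E\to B]$ and assume it is finite (if it is infinite there is nothing to prove). By Definition \ref{def1} there exist open sets $U_0,U_1,\dots,U_k\subset C$ with $U_0\cup\cdots\cup U_k\supset f(B)$ such that for each $i$ the fibration $p:E\to B$ admits a continuous section $s_i: f^{-1}(U_i)\to E$ of $p$. Define
\[
s_i' \, =\, \phi\circ s_i\colon f^{-1}(U_i)\,\longrightarrow\, E'.
\]
Then $p'\circ s_i' = p'\circ \phi\circ s_i = p\circ s_i = \mathrm{inclusion}$, so $s_i'$ is a continuous section of $p'$ over the $f$-saturated open set $f^{-1}(U_i)$. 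The same family $\{U_0,\dots,U_k\}$ now verifies condition (b) in Definition \ref{def1} for the fibration $p':E'\to B$, and condition (a) is unchanged since it does not involve $p$. Therefore $\secat_f[p':E'\to B]\le k$.

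There is no real obstacle here; the lemma is essentially a formal consequence of the definitions, and the only thing worth noting is that the cover $\{U_i\}$ lives downstairs in $C$ and is reused verbatim, so the $f$-saturation condition is automatically preserved. The argument parallels the classical monotonicity of ordinary sectional category under maps of fibrations over a common base, with $f$ playing a passive role.
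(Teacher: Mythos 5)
Your proof is correct and takes essentially the same route as the paper's: the paper also observes that a section of $p$ over $f^{-1}(U)$ can be post-composed with $\phi$ to yield a section of $p'$ over the same set, and then invokes the definition. You have simply spelled out the one-line composition in more detail.
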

\begin{proof}
If $U\subset C$ is such that $p$ admits a continuous section $s$ over $f^{-1}(U)\subset B$ then $p'$
admits a homotopy section $\phi\circ s$ over the same subset. Since $p'$ satisfies the homotopy lifting property, the homotopy 
section can be made a genuine section. 
The statement now follows from the definition.
\end{proof}

%
%
%

\begin{lemma}\label{lm:46}
 Suppose that for two fibrations $p: E\to B$ and $p': E'\to B'$ there exist continuous maps
 $G, \alpha, \beta, \hat\alpha$ shown on the diagram
\begin{center}
$\xymatrix{
{} & E'\ar[d]^{p'} \ar[r]^G & E\ar[d]^p\\
B \ar[r]_\alpha \ar[d]_f & B'\ar[r]_{\beta}\ar[d]^{f'} & B\\
C \ar[r]_{\hat \alpha} &C' & {}
}$
\end{center}
such that the bottom left square is commutative, the upper right square is homotopy commutative and the map $\beta\circ \alpha: B\to B$ is homotopic to the identity map $\Id_B: B\to B$. Then
$\secat_f[p: E\to B]\le \secat_{f'}[p': E'\to B']$.
\end{lemma}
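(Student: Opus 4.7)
The plan is to transport a witnessing cover for $\secat_{f'}[p':E'\to B']$ along the diagram and then use the homotopy $\beta\circ\alpha\simeq\Id_B$ together with the homotopy lifting property of $p$ to convert approximate sections into honest ones.

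First, set $k=\secat_{f'}[p':E'\to B']$ and choose an open cover $U'_0,\dots,U'_k$ of $f'(B')\subset C'$ together with continuous sections $s'_i\colon (f')^{-1}(U'_i)\to E'$ of $p'$. Define $V_i=\hat\alpha^{-1}(U'_i)\subset C$. Commutativity of the lower-left square gives $f^{-1}(V_i)=\alpha^{-1}((f')^{-1}(U'_i))$. Since $f'(\alpha(B))=\hat\alpha(f(B))$ and $\bigcup_i U'_i\supset f'(B')\supset f'(\alpha(B))$, the family $\{V_i\}$ covers $f(B)$.

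Next, write $W_i=f^{-1}(V_i)$ and define
\[
\sigma_i\colon W_i\to E,\qquad \sigma_i = G\circ s'_i\circ\alpha|_{W_i}.
\]
Commutativity of the upper-right square, together with $p'\circ s'_i=\mathrm{inclusion}$, yields
\[
p\circ\sigma_i = (p\circ G)\circ s'_i\circ\alpha|_{W_i} = \beta\circ p'\circ s'_i\circ\alpha|_{W_i} = \beta\circ\alpha|_{W_i}.
\]
Let $H\colon B\times I\to B$ be a homotopy from $\beta\circ\alpha$ to $\Id_B$. Its restriction to $W_i\times I$ starts at $p\circ\sigma_i$, so the homotopy lifting property of the Hurewicz fibration $p$ provides a lift $\widetilde H_i\colon W_i\times I\to E$ with $\widetilde H_i(\,\cdot\,,0)=\sigma_i$ and $p\circ\widetilde H_i=H|_{W_i\times I}$. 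The endpoint $\widetilde H_i(\,\cdot\,,1)\colon W_i\to E$ then satisfies $p\circ\widetilde H_i(\,\cdot\,,1)=\Id_{W_i}$, i.e.\ it is a genuine continuous section of $p$ over $W_i=f^{-1}(V_i)$.

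Since the $V_i$ cover $f(B)$ in $C$ and $p$ admits a section over each $f^{-1}(V_i)$, Definition \ref{def1} gives $\secat_f[p:E\to B]\le k$, as required. The only step beyond a pure diagram chase is the final application of the homotopy lifting property to upgrade $\sigma_i$ into a section; this is where the hypothesis $\beta\circ\alpha\simeq\Id_B$ (rather than equality) is actually used, and it is the main, but quite standard, technical move in the argument.
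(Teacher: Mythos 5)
Your proof is correct, but it takes a more direct route than the paper's. The paper's argument factors through an auxiliary pullback fibration $q\colon\bar E\to B$ of $p'$ along $\alpha$, then assembles the inequality by citing the earlier calculus (Lemma \ref{lm:homeo} and Lemma \ref{lm:map2} give $\secat_f[q\colon\bar E\to B]\le\secat_{f'}[p'\colon E'\to B']$), and finally lifts the homotopy $\beta\circ\alpha\simeq\Id_B$ once, at the level of the whole map $\bar G\colon\bar E\to E$, so that Lemma \ref{lm:map1} (maps of fibrations over the same base) closes the argument. You instead bypass the pullback and the intermediate lemmas entirely: you transport the cover via $\hat\alpha$, build candidate maps $\sigma_i=G\circ s'_i\circ\alpha|_{W_i}$ by hand, verify $p\circ\sigma_i=\beta\circ\alpha|_{W_i}$ by a short diagram chase, and then apply the homotopy lifting property of $p$ separately over each $W_i$ to correct $\sigma_i$ into an honest section. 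The end result is the same: in both proofs the single nontrivial ingredient is the HLP of $p$ against the homotopy $\beta\circ\alpha\simeq\Id_B$. What the paper's route buys is modularity — it reuses the $\secat_f$ calculus already established and avoids touching open covers directly — whereas your route is self-contained and makes the mechanism visibly explicit, which is pedagogically clearer but would not generalize as cleanly to later applications where the intermediate lemmas are invoked for their own sake.
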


\begin{proof} Consider the fibration $q:\bar E\to B$ induced by the map $\alpha: B\to B'$ from $p': E'\to B'$. It appears in the commutative diagram
\begin{center}
$\xymatrix{
\overline E\ar[r]^\psi \ar[d]_q & E'\ar[d]^{p'} \\
B \ar[r]_\alpha  & B'.
}$
\end{center}
Using Lemma \ref{lm:homeo} and  
Lemma \ref{lm:map2}
one obtains
\begin{eqnarray}\label{ineq11}
\secat_f[q:\bar E\to B] \le \secat_{\hat \alpha\circ f}[q: \bar E\to B]  \le \secat_{f'}[p': E'\to B'].
\end{eqnarray}
Next we note that 
the diagram 
\begin{center}
$\xymatrix{
\overline E\ar[rr]^{G\circ \psi} \ar[dr]_q && E \ar[dl]^p\\
{}&B&
}$
\end{center}
homotopy commutes: 
$$
p\circ G\circ \psi \simeq \beta\circ p'\circ \psi =\beta\circ \alpha\circ q \simeq q.
$$
Applying Lemma \ref{lm:map1} we obtain the inequality $\secat_f[p: E\to B] \le \secat_f[q:\bar E\to B]$
which together with (\ref{ineq11}) implies $\secat_f[p: E\to B]\le \secat_{f'}[p': E'\to B']$, as claimed.
\end{proof}
\begin{corollary} \label{cor:equal} Assume that in the diagram
\begin{center}
$\xymatrix{
E\ar[r]^F\ar[d]_p & E'\ar[d]^{p'} \ar[r]^G & E\ar[d]^p\\
B \ar[r]_\alpha \ar[d]_f & B'\ar[r]_{\beta}\ar[d]^{f'} & B\ar[d]^f\\
C \ar[r]_{\hat \alpha} &C' \ar[r]_{\hat \beta} & C
}$
\end{center}
the maps $p$ and $p'$ are fibrations, the lower squares are commutative, the upper squares are homotopy commutative and the maps $\alpha$ and $\beta$ are mutually inverse homotopy equivalences. Then 
$$\secat_f[p: E\to B]= \secat_{f'}[p': E'\to B'].$$
%
%
%
%
\end{corollary}
\begin{proof}
This follows from applying Lemma \ref{lm:46} twice: to the diagram of Lemma \ref{lm:46} and to the diagram
\begin{center}
$\xymatrix{
{} & E\ar[d]^{p} \ar[r]^F & E'\ar[d]^{p'}\\
B' \ar[r]_\beta \ar[d]_{f'} & B\ar[r]_{\alpha}\ar[d]^{f} & B'\\
C' \ar[r]_{\hat \beta} &C & {}
}$
\end{center}

\end{proof}
\begin{corollary}\label{cor:48}
Suppose that in the following commutative diagram 
$$\xymatrix{
E \ar[r]^F \ar[d]_-p & E' \ar[d]^-{p'} \\
B \ar[r]^-\alpha \ar[d]_-f & B' \ar[d]^-{f'} \\
C \ar[r]^-{\hat\alpha} & C'
}
$$
the maps $p'$, $f$, $f'$ are fibrations
and  $p$ is the induced fibration.
If $\alpha$ and $\hat\alpha$ are homotopy equivalences then
$$\secat_f[p: E\to B]= \secat_{f'}[p': E'\to B'].$$
\end{corollary}

\begin{proof}
By Lemma \ref{lm:map2} and Lemma \ref{lm:homeo}
we have $\secat_{f'}[p': E'\to B'] \geq \secat_{\hat\alpha f}[p:E\to B]\geq \secat_f[p:E\to B]$
and we must only show the inverse inequality.
Since $f$ and $f'$ are fibrations and $\alpha$ and $\hat\alpha$ are homotopy
equivalences, applying the Proposition on page 53 of \cite{May} we see that there exist homotopy inverses $\beta$
and $\hat \beta$ for $\alpha$ and $\hat\alpha$ respectively such that the following diagram
commutes
$$\xymatrix{
B' \ar[r]^-\beta \ar[d]_-{f'} & B \ar[d]^-f \\
C' \ar[r]^-{\hat \beta} & C.
}
$$
We obtain the commutative diagram
\begin{center}
$\xymatrix{
{} & E\ar[d]^{p} \ar[r]^F & E'\ar[d]^{p'}\\
B' \ar[r]_\beta \ar[d]_{f'} & B\ar[r]_{\alpha}\ar[d]^{f} & B'\\
C' \ar[r]_{\hat \beta} &C & {}
}$
\end{center}
with the composition $\alpha \circ \beta: B'\to B'$ homotopic to the identity map. Lemma \ref{lm:46} now gives
$\secat_{f'}[p': E'\to B'] \le \secat_f[p: E\to B]$. This completes the proof. 
%
%
%
%
%
\end{proof}

%
%
%

\subsection{Homotopical dimension}
For a topological space $A$ having the homotopy type of a finite-dimensional CW-complex we shall denote by
$\hdim(A)$ {\it the homotopical dimension of $A$}; it is defined as the minimal dimension of a CW-complex homotopy equivalent to $A$.

The following Lemma will be used later in our paper.

\begin{lemma}\label{lm:hdim}
Consider a locally trivial bundle $p: E\to B$ where $E$ and $B$ are separable metric spaces and the base $B$ and the fibre $F$ have the homotopy type of finite-dimensional CW-complexes.
Assume also that the fibre $F$ of $p: E\to B$ has finite covering dimension $\dim F$.
Then
the total space $E$ has the homotopy type of a finite-dimensional CW-complex and, moreover,
\begin{eqnarray}
\hdim (E)\, \le \, \hdim (B) +\dim F.
\end{eqnarray}
\end{lemma}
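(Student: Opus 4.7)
The plan is to reduce to the case of a CW base via pullback along a homotopy equivalence, and then to build $E$ inductively over the skeleta of the base, exploiting local triviality of the bundle to control how the homotopy dimension grows at each step.

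First, choose a homotopy equivalence $h\colon B'\to B$ with $B'$ a CW complex of dimension $\hdim(B)$, and pull the bundle back along $h$ to obtain a locally trivial bundle $p'\colon E'\to B'$ with fibre $F$. Since every locally trivial bundle over a paracompact base is a Hurewicz fibration, a classical theorem of Dold implies that the induced map $E'\to E$ is a homotopy equivalence; hence it suffices to bound $\hdim(E')$. Fix once and for all a CW model of $F$ of dimension $\le\dim F$ (a separable metric space with finite covering dimension and CW-homotopy type admits such a model by standard ANR theory) and set $E'_k=(p')^{-1}((B')^{(k)})$.

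I claim by induction on $k$ that $\hdim(E'_k)\le k+\dim F$. The case $k=0$ is immediate since $E'_0$ is a disjoint union of copies of $F$. For the inductive step, each $(k{+}1)$-cell $e_\alpha$ of $B'$ pulls the bundle back to a fibre bundle over $D^{k+1}$; contractibility of $D^{k+1}$ forces this pullback to be trivial, and so $E'_{k+1}$ is the pushout
\begin{equation*}
E'_{k+1}\;=\;E'_k\;\cup_{\,\bigsqcup_\alpha S^k\times F}\;\bigsqcup_\alpha(D^{k+1}\times F).
\end{equation*}
Each inclusion $S^k\times F\hookrightarrow D^{k+1}\times F$ is a cofibration, and equipping $D^{k+1}\times F$ with its product CW structure yields a CW complex of dimension $(k{+}1)+\dim F$. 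Since a pushout of CW complexes along a cellular cofibration has the homotopy type of a CW complex whose dimension is the maximum of those of its factors, we get $\hdim(E'_{k+1})\le(k{+}1)+\dim F$. Applying this up to $k=\dim B'=\hdim(B)$ gives the desired bound.

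The main technical point, and probably the only obstacle worth singling out, is the base case: one must know that a separable metric space with finite covering dimension and CW-homotopy type admits a CW model whose CW-dimension is bounded by its covering dimension. Once this is granted, the inductive step is purely formal — it depends only on contractibility of closed cells (to trivialise the pullback) and on the standard dimension estimate for pushouts along cellular cofibrations.
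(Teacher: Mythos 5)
Your proposal is correct and reaches the same conclusion, but by a genuinely different route. The paper's argument is short: after pulling the bundle back over a CW replacement $g\colon B'\to B$ with $\dim B'=\hdim B$ (exactly as you do), it observes that the total space $E'$ is a separable metric space of covering dimension $\dim E'\le\dim B'+\dim F$, cites Fritsch--Piccinini for $E'$ having CW homotopy type, and then converts covering dimension into homotopical dimension for the \emph{total space} $E'$. You instead construct a finite-dimensional CW model for $E'$ by hand, inducting over the skeleta of $B'$: local triviality and contractibility of closed cells identify $E'_{k+1}$ as a pushout of $E'_k$ and $\bigsqcup D^{k+1}\times F$ along $\bigsqcup S^k\times F$, and the dimension bookkeeping gives $\hdim(E'_{k+1})\le(k+1)+\dim F$. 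Your approach is more constructive and only invokes the ``covering dimension bounds CW-dimension'' principle for the fibre $F$ (the base case), rather than for the whole pulled-back total space; the paper's proof is shorter but leans on a dimension estimate $\dim E'\le\dim B'+\dim F$ for locally trivial bundles and on the fact that $E'$, not just $F$, has a CW model of the appropriate dimension. The one spot where your inductive step is slightly compressed is the gluing: the attaching map $\bigsqcup S^k\times F\to E'_k$ need not be cellular (nor even injective), so to invoke the dimension bound for the pushout you should first replace $F$ by its CW model $\bar F$, use the cofibration $S^k\times\bar F\hookrightarrow D^{k+1}\times\bar F$, lift the homotopy equivalence inductively via the gluing lemma for homotopy equivalences along cofibrations, and cellularly approximate the resulting attaching map $\bigsqcup S^k\times\bar F\to\bar E_k$; this preserves the homotopy type because the relevant inclusion is a cofibration. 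With that clarification the argument is complete.
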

\begin{proof} Let $g: B'\to B$ be a homotopy equivalence where $B'$ is a CW-complex satisfying $\dim B'=\hdim B$.
Consider the diagram
$$
\xymatrix{
E'\ar[r]^G\ar[d]_{p'} & E\ar[d]^p\\
B' \ar[r]_g & B
}
$$
where $p': E'\to B'$ is the fibration induced by $g$. Clearly $G$ is a homotopy equivalence and $\dim E'\le \dim B'+\dim F$.
By Theorem 5.4.2 from \cite{FPbook} the space
$E'$ has homotopy type of a  CW-complex. Hence we obtain
$$\hdim (E)  = \hdim (E')  \le \dim (E') \le \dim B'+\dim F= \hdim (B)+ \dim F.$$
\end{proof}

\subsection{An upper bound}
The following statement gives a useful upper bound for the invariant $\secat_{f}[p:E\to B]$.

\begin{proposition}\label{lm:upper} Assume that $E$, $B$, $C$ are separable metric spaces.
Let $p: E\to B$ be a fibration and let $f: B\to C$ be a locally trivial bundle such that:

(a) The space $C$ and the fibre $F_0$ of $f: B\to C$ have the homotopy type of CW-complexes;

(b) the fibre $F_1$
of the fibration $p: E\to B$ is $(k-1)$-connected, where $k\ge 0$;

(c) the fibre $F_0$ of the fibration $f: B\to C$ is $d$-dimensional
where $0\le d\le k$.

\noindent Then one has
\begin{eqnarray}\label{upperb}
\secat_{f}[p: E\to B] \, \le \, \left\lceil \frac{\dim B-k}{1+k - d}\right\rceil.
\end{eqnarray}
\end{proposition}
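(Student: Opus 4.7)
The plan is to replace $C$ by a CW model, cover that CW complex by open sets of small homotopical dimension, and then apply classical obstruction theory to produce sections of $p$ over the corresponding preimages in $B$.

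First I would reduce to the case where $C$ is a finite-dimensional CW complex. Since $C$ has the homotopy type of a CW complex, pick a homotopy equivalence $h\colon C'\to C$ with $C'$ a CW complex of dimension $\hdim C$. Pull $f$ back along $h$ to obtain a locally trivial bundle $f'\colon B'\to C'$ with the same fibre $F_0$, and then pull $p$ back along the induced homotopy equivalence $\alpha\colon B'\to B$ to obtain a fibration $p'\colon E'\to B'$. The resulting diagram satisfies the hypotheses of Corollary~\ref{cor:equal} with $\hat\alpha=h$, so $\secat_f[p\colon E\to B]=\secat_{f'}[p'\colon E'\to B']$. Because $F_0$ is a CW complex of covering dimension $d$ and $f$ is locally trivial, the local-product description together with the dimension theorem for bundles with polyhedral fibre gives $\dim C=\dim B-d$, hence $\hdim C'\le \dim C\le \dim B-d$.

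Next I would cover $C'$ via the classical decomposition argument underlying Schwarz's genus estimate \cite{Sch}: a CW complex of dimension $c$ admits an open cover $U_0,\dots,U_n$ with each $\hdim U_i\le m$ and $n=\lceil (c-m)/(m+1)\rceil$. Applied with $c=\hdim C'$ and $m=k-d$, this yields open sets $U_0,\dots,U_n$ covering $C'$ with $\hdim U_i\le k-d$ and $n\le \lceil (\dim B-k)/(1+k-d)\rceil$. For each $i$, the restriction $f'^{-1}(U_i)\to U_i$ is again a locally trivial bundle with $d$-dimensional CW fibre $F_0$, so Lemma~\ref{lm:hdim} gives $\hdim f'^{-1}(U_i)\le (k-d)+d=k$. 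Since the fibre $F_1$ of $p'$ is $(k-1)$-connected and $f'^{-1}(U_i)$ is homotopy equivalent to a CW complex of dimension at most $k$, classical obstruction theory produces a continuous section of $p'$ over $f'^{-1}(U_i)$. The $f'$-saturated cover $\{f'^{-1}(U_i)\}_{i=0}^{n}$ of $B'$ then witnesses $\secat_{f'}[p'\colon E'\to B']\le n$, which combined with the first step gives the claimed bound.

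The main obstacle is the decomposition step: producing a cover of a $c$-dimensional CW complex by open sets of homotopical dimension $\le m$ with the sharp count $\lceil (c-m)/(m+1)\rceil+1$ requires a careful grouping of cells by dimension modulo $m+1$ together with a thickening argument, exactly as in Schwarz's original sectional-category estimate. A secondary technical point is the identity $\dim C=\dim B-d$ for the CW-fibred bundle $f\colon B\to C$, which rests on Morita's product-dimension theorem applied in each trivialising chart; without this, one only has $\hdim B\le \hdim C+d$ via Lemma~\ref{lm:hdim}, which runs in the wrong direction for the arithmetic to go through.
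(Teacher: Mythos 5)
Your proposal follows essentially the same route as the paper's proof: replace $C$ by a simplicial/CW model and invoke Corollary \ref{cor:equal}, decompose that model into open pieces of homotopical dimension at most $k-d$ via a skeletal grouping (the paper spells out the nested skeleta $C^{(r)}\subset C^{(2r+1)}\subset\cdots$ and complements, which is exactly the ``cells modulo $m+1$ plus thickening'' decomposition you reference), then use Lemma \ref{lm:hdim} to bound $\hdim f^{-1}(U_i)\le k$ and finish with obstruction theory against the $(k-1)$-connected fibre $F_1$. The one point where you are more explicit than the paper is the dimension identity $\dim C\le\dim B-d$ (the paper silently writes $\lceil(\dim C-r)/(1+r)\rceil=\lceil(\dim B-k)/(1+k-d)\rceil$, which amounts to the same thing), and your observation that the naive bound from Lemma \ref{lm:hdim} runs in the wrong direction is a fair flag of where that identity is genuinely needed; after the reduction to simplicial $C$ it follows from the local product structure $U\times F_0$ and the product-dimension theorem for polyhedra, as you indicate.
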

\begin{proof} First we shall prove the statement under an additional assumption that $C$ is a simplicial complex.
We shall remove this assumption afterwards.


Consider the skeleta $C^{(i)}\subset C$ of $C$, where $i=0, 1, \dots.$ We know that for any two integers $0\le i< j$ the complement $C^{(i)}-C^{(j)}$ is homotopy equivalent to a simplicial complex of dimension at most $ i-j-1$, see for example \cite{FGLO}, Corollary 5.3.

We may find a chain of open subsets $U_0\subset U_1\subset U_2\subset \dots$ of $C$ such that each set $U_i$ contains $C^{(i)}$ as a strong deformation retract.

Setting $r=k-d$, consider the skeleta
$$C^{(r)} \subset C^{(2r+1)} \subset C^{(3r+2)} \subset \dots \subset C^{((c+1)r+c)},$$
where $c$ is the smallest integer satisfying $\dim C\le (c+1)r+c$, i.e.
$$c=\left\lceil \frac{\dim C-r}{1+ r}\right\rceil =
\left\lceil \frac{\dim B-k}{1+k - d}\right\rceil.
$$
Each complement
$$ X_i = C^{((i+1)r +i)} - C^{(ir +i-1)}, \quad i=0, 1, \dots c,$$
has the homotopy type
of a simplicial complex of dimension $\le r$.
The open set
$$Y_i = U_{(i+1)r+i}-C^{(ir+i-1)}\subset C$$ deformation retracts onto $X_i$ and therefore
$\hdim (Y_i)\le r$. Applying Lemma \ref{lm:hdim} we obtain
$$\hdim (V_i) \le r+d=k,$$ where
$$V_i =f^{-1}(Y_i)\, \subset \, B, \quad i=0, 1, \dots, c.$$
The fibre $F_1$ of $p: E\to B$ is $(k-1)$-connected, and thus we may apply the well-known result of the obstruction theory
stating that the fibration $p:E\to B$ admits a continuous section over each open set $V_i$, where $i=0, 1, \dots, c$.
Since $B=V_0\cup V_1\cup \dots \cup V_c$,
it shows that $\secat_{f}[p:E\to B]\le c$.
This completes the proof in the case when $C$ is a simplicial complex.

Consider now the general case, i.e. we shall only assume that $C$ has the homotopy type of a CW-complex.
We can find a simplicial complex $C'$ and a homotopy equivalence $\hat\alpha: C'\to C$ (see \cite{FPbook}, Theorem
5.2.1).
Consider the fibration $f' : B'\to C'$ induced by $\hat \alpha$ from $f: B\to C$.
The map $\alpha$ shown on the diagram below is a homotopy equivalence.
$$
\xymatrix{
E'\ar[r]^F\ar[d]_{p'} & E\ar[d]^{p}  \\
B' \ar[r]_\alpha \ar[d]_{f'} & B\ar[d]^{f}\\
C' \ar[r]_{\hat \alpha} &C
}
$$
The map $\alpha$ induces the fibration $p': E'\to B'$.
Applying Corollary \ref{cor:48} we obtain that $$\secat_{f'}[p':E'\to B'] = \secat_{f}[p:E\to B]$$ and hence the upper bound
(\ref{upperb}) which we proved above for $\secat_{f'}[p':E'\to B']$ applies to $\secat_{f}[p:E\to B]$ as well.
\end{proof}
%
%
%
\begin{remark}
In \cite{FGLO} an upper bound for topological complexity was derived that made use of an
invariant which was called 
$$\widetilde{\TC}(X)=\widetilde{\secat}(E \stackrel{p}{\to} \bar X \stackrel{q}{\to}
X)$$
there, but which we recognize in fact to be $\secat_q[p: E\to \bar X]$ here. In \cite{FGLO} it was further
shown that $\widetilde{\TC}(X)$ could be identified with the the notion of strongly invariant 
topological complexity $\TC_\pi^*(\widetilde X)$ introduced by A. Dranishnikov \cite{Dra} earlier. 
In \cite{PS}, A. K. Paul and D. Sen extended both the invariant $\widetilde{\TC}(X)$
and the strongly invariant topological complexity to the realm of sequential topological complexity
and proved the analogous identification.
This identification, in some sense, was the genesis of our calculus of sectional categories and 
together with Theorem \ref{thm:equiv} begs the question of exactly how parametrized topological
complexity and various forms of equivariant topological complexity are intertwined, especially
in the case of locally trivial fibre bundles.

\end{remark}

\section{Sectional category of towers of fibrations}\label{sec5}
\subsection{}
Consider a tower of fibrations
$$E_r\stackrel{p_r}\to E_{r-1}\stackrel{p_{r-1}}\to E_{r-2} \to \dots \stackrel{p_1}\to E_0$$
and the total fibration
$$p=p_1p_2\dots p_r: E_r \to E_0.$$
We shall assume that all spaces $E_i$ are normal.
\begin{theorem}\label{lm:tower}
The sectional category $\secat[p: E_r\to E_0]$ of the total fibration admits the following lower and upper bounds:
\begin{eqnarray*}\label{ineq1}
\hskip 1cm \secat[p_1: E_1\to E_0]  &\le& \secat[p: E_r\to E_0] \, \\
&\le & \, \secat[p_1: E_1\to E_0]
+\sum_{i=1}^{r-1} \secat_{(p_1p_2\dots p_i)}[p_{i+1}:E_{i+1}\to E_i].
\end{eqnarray*}
Here $p_1p_2\dots p_i:E_i\to E_0$ denotes the composition.
\end{theorem}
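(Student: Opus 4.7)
The plan is to prove the two inequalities separately. The lower bound is immediate, and the upper bound reduces by induction on $r$ to the two-stage case, which is a Schwarz-type subadditivity statement for the new invariant $\secat_f$.

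For the lower bound $\secat[p_1:E_1\to E_0]\le\secat[p:E_r\to E_0]$, I would simply note that any local section $\sigma:U\to E_r$ of $p=p_1p_2\cdots p_r$ over an open subset $U\subset E_0$ produces a local section $(p_2\circ\cdots\circ p_r)\circ\sigma:U\to E_1$ of $p_1$ over the same $U$. An optimal open cover realizing $\secat[p]$ thus yields a cover of equal cardinality with local sections of $p_1$.

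For the upper bound I would induct on $r$. The case $r=1$ is trivial, as the sum is empty. For the inductive step, writing $q=p_1p_2\cdots p_{r-1}:E_{r-1}\to E_0$, it suffices to establish the two-stage inequality
\begin{equation*}
\secat[q\circ p_r: E_r\to E_0] \,\le\, \secat[q: E_{r-1}\to E_0] \,+\, \secat_q[p_r:E_r\to E_{r-1}].
\end{equation*}
Setting $n=\secat[q]$ and $m=\secat_q[p_r]$, pick an open cover $\{U_0,\dots,U_n\}$ of $E_0$ with sections $s_i:U_i\to E_{r-1}$ of $q$, and an open family $\{V_0,\dots,V_m\}$ of $E_0$ covering $q(E_{r-1})=E_0$ with sections $\sigma_j:q^{-1}(V_j)\to E_r$ of $p_r$. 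Over each intersection $U_i\cap V_j$, the section $s_i$ maps into $q^{-1}(U_i\cap V_j)\subset q^{-1}(V_j)$, so the composite $\sigma_j\circ s_i:U_i\cap V_j\to E_r$ is well-defined, and the identities $p_r\circ\sigma_j=\Id$ and $q\circ s_i=\Id$ show it is a continuous section of $q\circ p_r$.

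This naive product construction only produces a cover of $E_0$ by $(n+1)(m+1)$ sets, which would give the weaker bound $nm+n+m$; the main obstacle is trimming the count down to $n+m+1$. This I would handle by the classical Schwarz--Ganea refinement, in which normality of $E_0$ plays an essential role: extract partitions of unity $\{\lambda_i\}$ and $\{\mu_j\}$ subordinate to the covers $\{U_i\}$ and $\{V_j\}$, and for each $k=0,\dots,n+m$ assemble $W_k$ as a disjoint union of open pieces $W_k^{(i,j)}\subset U_i\cap V_j$ indexed by the pairs with $i+j=k$, where the pieces are carved out by pointwise comparisons among the products $\lambda_i\mu_j$ so as to be pairwise disjoint for fixed $k$. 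The already-constructed sections $\sigma_j\circ s_i$ restrict to each $W_k^{(i,j)}$ and, by this disjointness, glue to a continuous section over all of $W_k$, producing the required open cover of $E_0$ by $n+m+1$ sets with sections of $q\circ p_r$ and closing the induction.
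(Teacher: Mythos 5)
Your argument is correct and follows essentially the same route as the paper: both reduce to a mixing-of-covers lemma for normal spaces (the paper's Lemma~5.2, cited from Oprea--Strom), which is precisely the Ostrand/Ganea-type refinement you sketch with partitions of unity, and both build the final local sections of $p$ by composing the intermediate sections as you do. The only organizational difference is that the paper applies the mixing lemma to all $r$ cover-properties of $E_0$ simultaneously and then constructs the composite section $s_{r-1}\cdots s_1 s_0$ in a single pass, whereas you induct on $r$ and invoke the two-property case of the mixing at each step; the mathematical content is the same.
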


Lemma \ref{lm1} below will be used in the proof of Theorem \ref{lm:tower}.
\begin{lemma}\label{lm1}
Let $C$ be a normal space. Consider properties $A_1, A_2, \dots, A_r$ of open subsets of $C$, such that each property $A_i$ is inherited by open subsets and disjoint unions. Assume that for each $i=1, 2, \dots, r$
$C$ admits an open cover consisting of
$n_i+1$ open sets satisfying the property $A_i$. Then $C$ admits an open cover consisting of $N+1$ open sets, where $N=\sum_{i=1}^r n_i$, satisfying all the properties $A_1, \dots, A_r$.
\end{lemma}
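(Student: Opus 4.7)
The plan is to induct on $r$, reducing to the base case $r=2$. The conjunction $A_1 \wedge \cdots \wedge A_{r-1}$ is again inherited by open subsets and by disjoint unions (trivially, from its conjuncts), so one may iteratively invoke the $r=2$ result: combine the cover of size $(\sum_{i<r} n_i) + 1$ produced by the inductive hypothesis (enjoying all of $A_1, \dots, A_{r-1}$) with the given cover of size $n_r + 1$ for $A_r$, obtaining a cover of size $N+1$ enjoying all the properties.

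For the base case, write the two covers as $\{U_0, \dots, U_n\}$ (property $A$) and $\{V_0, \dots, V_m\}$ (property $B$). Since $C$ is normal and the covers are finite, the shrinking lemma together with Urysohn's lemma produces partitions of unity $\{\lambda_i\}$ and $\{\mu_j\}$ subordinate to $\{U_i\}$ and $\{V_j\}$. Setting $f_{ij}(x) := \lambda_i(x)\mu_j(x)$, one has $\sum_{i,j} f_{ij} \equiv 1$ with $\{f_{ij}>0\} \subseteq U_i \cap V_j$. For each pair I define
$$W_{ij} := \bigl\{x \in C : f_{ij}(x) > 0 \text{ and } f_{ij}(x) > f_{i'j'}(x) \text{ for all } (i',j') \neq (i,j) \text{ with } i'+j' = i+j\bigr\}.$$
By strict inequality, the family $\{W_{ij}\}_{i+j=k}$ is pairwise disjoint for each fixed level $k$, and each $W_{ij}$ is contained in some $U_i \cap V_j$; hence $W_k := \bigsqcup_{i+j=k} W_{ij}$ is open, and by the assumed hereditary properties enjoys both $A$ and $B$.

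The main obstacle is to verify that $\{W_0, W_1, \dots, W_{n+m}\}$ actually covers $C$; the subtlety is tie-breaking within a level. For $x \in C$ set $k^*(x) := \min\{i+j : f_{ij}(x) > 0\}$, which is well-defined since $\sum_{i,j} f_{ij}(x) = 1$. The key observation is that at level $k^*$ there is a \emph{unique} pair with $f > 0$: if two distinct pairs $(i_1, j_1)$ and $(i_2, j_2)$ with $i_1 < i_2$ (hence $j_1 > j_2$) both satisfied $f>0$, then positivity of $\lambda_{i_1}(x)$ and $\mu_{j_2}(x)$ would force $f_{i_1 j_2}(x) > 0$ at the strictly smaller level $i_1 + j_2 < k^*$, contradicting minimality. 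For the resulting unique pair $(i_0, j_0)$ at level $k^*$, all other level-$k^*$ values vanish at $x$ while $f_{i_0 j_0}(x) > 0$ strictly dominates them, giving $x \in W_{i_0 j_0} \subseteq W_{k^*}$. This yields $\bigcup_k W_k = C$ and completes the proof.
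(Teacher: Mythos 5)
Your proof is correct, and it goes further than the paper's: the paper disposes of the lemma by citing Lemma~4.3 of Oprea--Strom \cite{OS} for the case $r=2$ and then inducting, whereas you give a self-contained proof of the base case. The inductive reduction you describe is exactly the paper's, and it is sound since the conjunction $A_1 \wedge \cdots \wedge A_{r-1}$ inherits to open subsets and disjoint unions whenever each conjunct does.

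For the $r=2$ base case your argument is correct and essentially the classical Ostrand-style ``level sets of $i+j$'' construction that underlies \cite{OS}. Two checks worth recording explicitly: (i) the $W_{ij}$ are open because $f_{ij}$ and the finitely many differences $f_{ij}-f_{i'j'}$ (over pairs at the same level) are continuous, and for fixed $k$ they are pairwise disjoint by the strict-inequality requirement, so $W_k=\bigsqcup_{i+j=k}W_{ij}$ is open, with each piece contained in a single $U_i$ and a single $V_j$; hence $W_k$ satisfies $A$ and $B$ by the ``open subset'' and ``disjoint union'' hypotheses. (ii) The covering argument hinges on the uniqueness at the minimal level $k^*(x)$, which you establish via the product structure $f_{ij}=\lambda_i\mu_j$: two distinct positive pairs at the same level would force a positive product at a strictly lower level. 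This neatly sidesteps the tie-breaking issue that a naive ``take the maximum'' construction would face. One small remark: the family $\{\lambda_i\mu_j\}$ is indeed a partition of unity since $\sum_{i,j}\lambda_i\mu_j=\bigl(\sum_i\lambda_i\bigr)\bigl(\sum_j\mu_j\bigr)=1$, which is what guarantees $k^*(x)$ is well defined. In short, you have supplied the proof the paper outsources, using the standard mixing technique; the result and its range of validity (normal $C$) match the paper's statement.
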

\begin{proof} For $r=2$ this statement was proven in \cite{OS} as Lemma 4.3. The case $r>2$ follows from this by induction.
\end{proof}

\begin{proof}[Proof of Theorem \ref{lm:tower}] Since the left inequality in (\ref{ineq1}) is obvious we shall concentrate on the right one and use
Lemma \ref{lm1} to prove it. Consider the following properties $A_1, A_2, \dots, A_r$ of open subsets of $E_0$. We shall say that an open subset $U\subset E_0$ satisfies $A_1$ if $U$ has a continuous section of the fibration $p_1$.
For $2\le i\le r$, we shall say that an open subset $U\subset E_0$ satisfies the property $A_i$ if the open set
$p_{i-1}^{-1}\dots p_2^{-1}p_1^{-1}(U)\subset E_{i-1}$ admits a continuous section of $p_i$. By definition, for any $i=1, 2,
\dots r$, the set $E_0$ admits an open cover of cardinality $\secat_{(p_1p_2\dots p_{i-1})}[p_i: E_i\to E_{i-1}]+1$
with each set satisfying $A_i$. Applying Lemma \ref{lm1}, we obtain that $E_0$ admits an open cover $\{U_j\}$ of cardinality
$\sum_{i=1}^r n_i +1$ such that each set $U_j$ satisfies all the properties $A_1, \dots, A_r$.
This means that there exists a continuous section $s_0:U_j\to E_1$ of $p_1$ and
for any $i=1, 2, \dots, r-1$, there exists a continuous section $$s_i: p_i^{-1}\dots p_2^{-1}p_1^{-1}(U_j)\to E_{i+1}$$ of the fibration $p_i$. Hence, the composition
$$s= s_{r-1}s_{r-2}\dots s_1s_0: U_j \to E_r$$
is a well-defined continuous section of the composition $p=p_1p_2\dots p_r: E_r \to E_0.$ This gives the inequality (\ref{ineq1}).
\end{proof}

For convenience of references,  we state below the special case $r=2$ of Theorem \ref{lm:tower} which we combine with the dimension-connectivity upper bound of Proposition \ref{lm:upper}:

\begin{corollary}\label{cor:43}
Consider a tower of fibrations
$E_2\stackrel {p_2}\to E_1\stackrel {p_1}\to  E_0 $
of separable metric spaces. Assume that $p_1: E_1\to E_0$ is locally trivial.
Then the sectional category $\secat[p: E_2\to E_0]$ of the total bundle
$$p=p_2\circ p_1: E_2\to E_0$$
lies between $\secat[p_1: E_1\to E_0]$ and
\begin{eqnarray}\secat[p_1: E_1\to E_0] +\secat_{p_1}[p_2: E_2\to E_1].\end{eqnarray}
Moreover,
under the following additional assumptions
\begin{itemize}
\item [{ (a)}] The fibre of $p_2: E_2\to E_1$ is $(k-1)$-connected;
\item [{ (b)}] The space $E_0$ and the fibre of $p_1: E_1\to E_0$ have the homotopy type of CW-complexes;
\item [{ (c)}] The fibre of $p_1: E_1\to E_0$ has dimension $\le d$ where $0\le d\le k$.
\end{itemize}
one has
\begin{eqnarray}
\secat_{p_1}[p_2: E_2\to E_1]\le \left\lceil
\frac{\dim E_1 -k}{1+k-d}
\right\rceil
\end{eqnarray}

\end{corollary}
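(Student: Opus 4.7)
The plan is to assemble the corollary from the two preceding results applied to the two-stage case: the sandwich inequality comes from Theorem~\ref{lm:tower} specialised to $r=2$, and the numerical upper bound comes from Proposition~\ref{lm:upper} with the substitution $p = p_2$, $f = p_1$. My proof therefore reduces to matching hypotheses. I do not foresee a substantive obstacle; the only small verification needed is that the separable metric hypothesis of Proposition~\ref{lm:upper} is satisfied, and this is part of the corollary's standing assumption.

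First I would specialise Theorem~\ref{lm:tower} to $r=2$. The sum on the right-hand side of its upper bound then collapses to the single summand $\secat_{p_1}[p_2: E_2\to E_1]$, yielding exactly
\[
\secat[p_1: E_1\to E_0]\, \le\, \secat[p: E_2\to E_0]\, \le\, \secat[p_1: E_1\to E_0] + \secat_{p_1}[p_2: E_2\to E_1].
\]
The normality hypothesis used inside Theorem~\ref{lm:tower} (via Lemma~\ref{lm1}) is automatic because separable metric spaces are normal, so no extra assumption is needed.

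Next I would apply Proposition~\ref{lm:upper} to the fibration $p_2: E_2\to E_1$ together with the locally trivial bundle $p_1: E_1\to E_0$. The hypotheses translate cleanly: the role of $C$ is played by $E_0$ and the role of the fibre $F_0$ by the fibre of $p_1$, so hypothesis (b) of the corollary supplies condition (a) of the proposition; the $(k-1)$-connectivity of the fibre of $p_2$ (hypothesis (a) here) is condition (b) of the proposition; and $\dim F_0 \le d \le k$ (hypothesis (c)) is condition (c). The proposition then yields
\[
\secat_{p_1}[p_2: E_2\to E_1]\, \le\, \left\lceil \frac{\dim E_1 - k}{1+k-d}\right\rceil,
\]
which is the final assertion. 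The corollary is thus essentially a dictionary entry combining the two preceding results, and the only real ``step'' is to recognise that the assumptions have been chosen precisely so that both imports apply verbatim.
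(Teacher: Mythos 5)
Your proposal is exactly how the paper treats this statement: the corollary is introduced precisely as the $r=2$ case of Theorem~\ref{lm:tower} combined with the dimension--connectivity bound of Proposition~\ref{lm:upper}, and no separate proof is given. Your hypothesis-matching (separable metric $\Rightarrow$ normal for Lemma~\ref{lm1}, and the identifications $C=E_0$, $B=E_1$, $E=E_2$, $F_0=$ fibre of $p_1$, $F_1=$ fibre of $p_2$) is correct and complete.
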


\section{Product inequalities}\label{sec6}
Lemma \ref{lm1} distills the main results of \cite{O, Dr1,Dr2,OS}, but for the
product inequalities which we describe below we need more specific information about open covers.

An open cover ${\mathcal W}=\{W_0,\ldots,W_{m+k}\}$ of a space $C$ is an
\emph{$(m+1)$-cover} if every subcollection
$\{W_{j_0}, W_{j_1} , \ldots, W_{j_m}\}$ of $m+1$ sets
from ${\mathcal W}$ also covers $C$.
The following simple observation (see \cite{FGLO} for instance)
is the basis for many arguments in this approach.

\begin{lemma}\label{lem:mplus1cov}
A cover ${\mathcal W}=\{W_0,W_1, \ldots,W_{k+m}\}$ is an $(m+1)$-cover of $C$
if and only if each $x \in C$ is contained in at least $k+1$ sets of
${\mathcal W}$.
\end{lemma}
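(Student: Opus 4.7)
The plan is to prove both directions by a straightforward counting (pigeonhole) argument, working with the complementary quantity ``number of sets that \emph{miss} $x$''. Fix the cover $\mathcal{W}=\{W_0,\ldots,W_{k+m}\}$, which consists of $k+m+1$ sets total, and for each $x\in C$ let $n(x)$ denote the cardinality of $\{i : x\in W_i\}$. Then the number of sets in $\mathcal{W}$ not containing $x$ equals $(k+m+1)-n(x)$. The whole lemma will reduce to comparing this quantity with the threshold $m$.

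For the direction ``$\Leftarrow$'', I would assume $n(x)\ge k+1$ for every $x\in C$ and pick an arbitrary subcollection $\{W_{j_0},\ldots,W_{j_m}\}$ of size $m+1$. Since at most $(k+m+1)-(k+1)=m$ sets can fail to contain $x$, and the complement of the chosen subcollection has size $(k+m+1)-(m+1)=k$, by counting the chosen subcollection must include at least one $W_{j_\ell}$ containing $x$. Thus the chosen $m+1$ sets cover $x$; since $x$ was arbitrary, they cover $C$, so $\mathcal{W}$ is an $(m+1)$-cover.

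For the direction ``$\Rightarrow$'', I would argue by contrapositive: assume there exists some $x\in C$ with $n(x)\le k$. Then the number of sets missing $x$ is at least $(k+m+1)-k=m+1$, so one may select an $(m+1)$-subcollection consisting entirely of sets that do not contain $x$. This subcollection fails to cover $C$, so $\mathcal{W}$ fails to be an $(m+1)$-cover, completing the contrapositive.

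There is no serious obstacle; the only thing to be careful about is bookkeeping the two constants correctly, namely that the total number of sets is $k+m+1$ (one more than the maximum index $k+m$), so that the two complementary inequalities $n(x)\ge k+1$ and ``at most $m$ sets miss $x$'' really are equivalent. Once that is set up, the lemma follows immediately from finite cardinality, with no topology involved.
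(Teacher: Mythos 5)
Your argument is correct: both directions are exactly the complementary counting (pigeonhole) observation, and the bookkeeping with $k+m+1$ total sets is handled properly. The paper itself does not supply a proof of this lemma but simply cites \cite{FGLO}; the counting argument you give is the standard one used there, so your approach matches the intended one.
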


An open cover can be lengthened to a $(k+1)$-cover,
while retaining certain essential properties of the sets in the cover.

\begin{theorem}[{\cite{O,Dr1}}]\label{thm:extopencov}
Let ${\mathcal U}=\{U_0,\ldots,U_k\}$ be an open cover of a normal space $C$.
Then, for any $m=k,k+1,\ldots,\infty$, there is an open $(k+1)$-cover
of $C$, $\{U_0,\ldots,U_m\}$, extending ${\mathcal U}$ such that for $n > k$,
$U_n$ is a disjoint union of open sets that are subsets of the
$U_j$, $0 \leq j \leq k$.
\end{theorem}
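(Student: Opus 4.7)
The plan is to construct the extra sets $U_{k+1}, U_{k+2}, \ldots$ inductively, maintaining throughout the invariant that the family $\{U_0, \ldots, U_{n-1}\}$ already built is a $(k+1)$-cover of $C$. By Lemma~\ref{lem:mplus1cov} this is equivalent to saying that every point of $C$ lies in at least $n-k$ of the currently listed sets, so the step from $n-1$ to $n$ is required to promote the minimum multiplicity by one. Accordingly, I would focus on the \emph{deficient set}
\[
A_n \;=\; \bigl\{\,x \in C : x \text{ lies in exactly } n-k \text{ of the sets } U_0, \ldots, U_{n-1}\,\bigr\},
\]
which is closed in $C$ because its complement (points lying in at least $n-k+1$ of the $U_i$) is open, and construct $U_n$ as an open neighbourhood of $A_n$ of the prescribed form.

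For each $(n-k)$-subset $I \subset \{0, \ldots, n-1\}$ set $A_n^I = \{x \in A_n : x \in U_i \iff i \in I\}$, so $A_n = \bigsqcup_I A_n^I$. The crucial observation is that each $A_n^I$ is in fact \emph{closed} in $C$: any limit point $x$ lies in the closed set $A_n$ and in $C \setminus U_i$ for every $i \notin I$, so the $n-k$ indices with $x \in U_i$ must coincide with the $n-k$ elements of $I$, forcing $x \in A_n^I$. Since $\{k+1, \ldots, n-1\}$ has only $n-k-1$ elements, a pigeonhole argument gives $I \cap \{0, \ldots, k\} \ne \emptyset$, so we may select an index $j(I) \in I \cap \{0, \ldots, k\}$ for each $I$. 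Grouping strata according to this assignment yields pairwise disjoint closed sets
\[
A_n^{(j)} \;:=\; \bigcup_{I:\, j(I)=j} A_n^I \;\subset\; U_j, \qquad j = 0, 1, \ldots, k.
\]

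The final step is to disjointly thicken each $A_n^{(j)}$ inside the corresponding $U_j$. Since $A_n^{(j)}$ and $\bigl(\bigcup_{l \ne j} A_n^{(l)}\bigr) \cup (C \setminus U_j)$ are disjoint closed subsets of the normal space $C$, Urysohn's lemma produces a continuous function $\phi_j : C \to [0,1]$ equal to $1$ on $A_n^{(j)}$ and to $0$ on the other $A_n^{(l)}$ and on $C \setminus U_j$. Setting
\[
O_j \;:=\; \{\,x \in C : \phi_j(x) > \phi_i(x) \text{ for all } i \ne j\,\}
\]
yields pairwise disjoint open sets with $A_n^{(j)} \subset O_j \subset U_j$ (the last inclusion because $x \in O_j$ forces $\phi_j(x) > 0$), and I would take $U_n := \bigsqcup_{j=0}^k O_j$. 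By construction $U_n$ is open, is a disjoint union of open subsets of the original $U_j$, and contains $A_n$; hence the enlarged family $\{U_0, \ldots, U_n\}$ has minimum multiplicity $\ge n-k+1$ and the induction continues (running ad infinitum when $m = \infty$). The whole argument hinges on the closedness of the strata $A_n^I$; without it one would be forced to separate merely locally closed subsets of $C$ by disjoint open neighbourhoods inside the $U_j$, which would require considerably more work.
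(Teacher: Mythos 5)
Your argument is correct. Since the paper cites Ostrand and Dranishnikov for this result without reproducing a proof, there is no in-paper argument to compare against directly; you have supplied an independent, self-contained proof. Let me record why it works and one point worth flagging.

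The inductive invariant is the right one: by Lemma~\ref{lem:mplus1cov} applied to a family of $n$ sets, being a $(k+1)$-cover is equivalent to every point lying in at least $n-k$ of them, and the base case $n=k+1$ is just the hypothesis that $\{U_0,\dots,U_k\}$ covers $C$. The genuinely clever step is the closedness of the strata $A_n^I$. A priori $A_n^I = A_n \cap \bigcap_{i\in I} U_i \cap \bigcap_{i\notin I}(C\setminus U_i)$ is only locally closed, but because every point of $A_n$ lies in \emph{exactly} $n-k$ of the $U_i$ and $|I| = n-k$, the closed conditions $x\in A_n$ and $x\notin U_i$ for $i\notin I$ already force $x\in U_i$ for all $i\in I$. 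This rescues closedness, and without it the disjoint thickening step would indeed require separating merely locally closed sets, which normality does not give for free. The pigeonhole count $|I| = n-k > n-k-1 = |\{k+1,\dots,n-1\}|$ correctly forces $I$ to meet $\{0,\dots,k\}$, so each piece of $U_n$ lands inside one of the original $U_j$ exactly as the statement demands. Finally, the Urysohn-function device $O_j = \{x : \phi_j(x) > \phi_i(x)\ \forall i\neq j\}$ is a standard and clean way to produce pairwise disjoint open neighbourhoods of pairwise disjoint closed sets, each confined to the prescribed $U_j$. For $m=\infty$ the construction runs indefinitely, and any $k+1$ chosen sets are seen to cover $C$ by passing to a finite truncation $\{U_0,\dots,U_N\}$ with $N$ the largest chosen index and applying the invariant there. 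In short: a correct and nicely economical proof, in the same combinatorial spirit as Ostrand's and Dranishnikov's arguments, with the stratification-plus-Urysohn step doing the work that the literature typically handles via shrinking the cover.
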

We use these facts to obtain inequalities for product fibrations.

\begin{lemma}[Product Inequality, I]\label{lm2}\label{lm:prod1}
Let $p:E\to B$ and $p':E'\to B'$ be fibrations and let $f: B\to C$ and $f': B'\to C'$
be continuous maps. Assume that the spaces $f(B)$ and $f'(B')$ with topology induced from $C$ and $C'$ respectively
are normal.  Then the sectional category of the product fibration
$$\secat_{f\times f'}[p\times p': E\times E'\to B\times B'] $$ is bounded above by the sum
$$\secat_{f}[p: E\to B]+
\secat_{f'}[p': E'\to B']$$
and it is bounded below by
$$\max\{\secat_{f}[p: E\to B], \,
\secat_{f'}[p': E'\to B']\}.$$
\end{lemma}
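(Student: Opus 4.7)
The plan is to treat the upper and lower bounds by rather different methods. For the upper bound I would adapt the classical extension-to-$(k+1)$-cover argument of Theorem \ref{thm:extopencov}, applied inside the normal subspaces $f(B)$ and $f'(B')$. For the lower bound I would slice the product fibration by fixing a point in one of the bases and projecting.

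For the upper bound, set $k=\secat_f[p:E\to B]$ and $k'=\secat_{f'}[p':E'\to B']$, and fix open sets $\{U_0,\dots,U_k\}\subset C$ and $\{U'_0,\dots,U'_{k'}\}\subset C'$ realising these values. Restricting the covers to the normal subspaces $f(B)$ and $f'(B')$, I would apply Theorem \ref{thm:extopencov} to extend them to a $(k+1)$-cover $\{\tilde V_0,\dots,\tilde V_{k+k'}\}$ of $f(B)$ and a $(k'+1)$-cover $\{\tilde V'_0,\dots,\tilde V'_{k+k'}\}$ of $f'(B')$, each new set being a disjoint union of open subsets of an original one. Lemma \ref{lem:mplus1cov} then forces every point of $f(B)$ to lie in at least $k'+1$ of the $\tilde V_l$ and every point of $f'(B')$ to lie in at least $k+1$ of the $\tilde V'_l$, so a pigeonhole count on the $k+k'+1$ indices produces, for every $(c,c')\in f(B)\times f'(B')$, a common index $l$ with $c\in \tilde V_l$ and $c'\in \tilde V'_l$. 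The products $\tilde V_l\times \tilde V'_l$ therefore cover $f(B)\times f'(B')$, and lifting them to open products $W_l\times W'_l\subset C\times C'$ with $(W_l\times W'_l)\cap (f(B)\times f'(B'))=\tilde V_l\times \tilde V'_l$ gives the required family of $k+k'+1$ sets in $C\times C'$. The key verification is that $p\times p'$ admits a section over each $(f\times f')^{-1}(W_l\times W'_l)=f^{-1}(\tilde V_l)\times f'^{-1}(\tilde V'_l)$: the disjoint-union description of $\tilde V_l$ provided by Theorem \ref{thm:extopencov} lets one concatenate restrictions of the original sections over pairwise disjoint open pieces of $f^{-1}(\tilde V_l)$ into a continuous section of $p$, and taking the product with the analogous section on the primed side completes the step.

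For the lower bound, by symmetry it suffices to show $\secat_f[p:E\to B]\le \secat_{f\times f'}[p\times p':E\times E'\to B\times B']$. Given a family $W_0,\dots,W_n\subset C\times C'$ covering $f(B)\times f'(B')$ together with sections $S_l$ of $p\times p'$ over $(f\times f')^{-1}(W_l)$, I would fix a point $b'\in B'$, set $c'=f'(b')$, and form the slices $U_l=\{c\in C : (c,c')\in W_l\}$. These are open in $C$ as preimages of $W_l$ under the continuous inclusion $c\mapsto(c,c')$, and they cover $f(B)$ because $(f(b),c')\in f(B)\times f'(B')\subset\bigcup_l W_l$ for every $b\in B$. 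Finally, $\sigma_l(b):=\mathrm{pr}_E\circ S_l(b,b')$ defines a continuous section of $p$ over $f^{-1}(U_l)$, since projecting the identity $(p\times p')\circ S_l=\mathrm{incl}$ to the first factor gives $p\circ \sigma_l(b)=b$.

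The main obstacle will be the bookkeeping in the upper bound: one must verify that the disjoint-union structure produced by Theorem \ref{thm:extopencov} inside the subspace $f(B)$ lifts to open product sets in $C\times C'$ whose $(f\times f')$-preimages still admit continuous sections. Once this is in place, both the pigeonhole step and the product of sections are routine, and the lower bound follows by the direct slicing argument above.
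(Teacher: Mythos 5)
Your proposal is correct and follows essentially the same route as the paper: the lower bound by slicing the product at a fixed basepoint of the other factor, and the upper bound by applying Theorem \ref{thm:extopencov} (Ostrand/Dranishnikov) to extend the two covers and then using the pigeonhole argument of Lemma \ref{lem:mplus1cov} on the diagonal products $W_l = U_l\times U'_l$. You are in fact a bit more careful than the paper's write-up on one technical point, namely that Theorem \ref{thm:extopencov} must be applied inside the normal subspaces $f(B)$ and $f'(B')$ rather than to $C$ and $C'$ directly, and that the resulting open sets and the disjoint-union structure must then be pulled back and glued to produce sections; this is exactly the right thing to say.
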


\begin{proof} First we deal with the lower bounds. Fix a point $b'_0\in B'$ and embed $B$ into $B\times B'$ via
$b\mapsto (b, b'_0)$; also, embed $C$ into $C\times C'$ via $x\mapsto (x, x_0')$ where $x'_0=f'(b'_0)$.
For an open subset $U\subset C\times C'$, a section of $p\times p'$ over $(f\times f')^{-1}(U)\subset B\times B'$
 determines obviously a section of $p$ over $f^{-1}(U\cap (C\times x'_0))$. This implies the inequality
 $\secat_{f\times f'}[p\times p': E\times E'\to B\times B'] \ge \secat_{f}[p: E\to B].$ Similarly, one obtains
  $\secat_{f\times f'}[p\times p': E\times E'\to B\times B'] \ge \secat_{f'}[p': E'\to B'].$

Now we prove the upper bound.
Let $\secat_{f}[p: E\to B]=k$ be realized by open sets $U_0,\ldots,U_k\subset C$ covering $f(B)\subset C$,
with continuous sections $s_j\colon f^{-1}(U_j)\to E$ of $p$, and let $\secat_{f'}[p': E'\to B']=m$
be realized by open sets $V_0,\ldots, V_m\subset C'$ covering $f'(B')$,
with sections
$s'_j\colon f'^{-1}(V_j)\to E'$  of $p'$.
By Theorem \ref{thm:extopencov} we can extend the family $U_0,\ldots,U_k$ to a family of open subsets
$U_0,\ldots,U_{k+m}$ of $C$ such that any $k+1$ members of this family cover $f(B)$. Similarly,
we can find a family $V_0,\ldots,V_{k+m}$ of open subsets of $C'$ extending the initial family
$V_0,\dots, V_m$
such that
any $m+1$ members of this extended family cover $f'(B')$.
Theorem \ref{thm:extopencov} guarantees that every set of the form $f^{-1}(U_j)$ or $f'^{-1}(V_j)$ admits a continuous section of $p$ or $p'$ respectively, where $j=0, 1, \dots, k+m$.

Denotting $W_j=U_j\times V_j$,  where
$j=0,\ldots,k+m$, we see that each set
$(f\times f')^{-1}(W_j)=f^{-1}(U_j)\times f'^{-1}(V_j)$ admits a continuous section of $p\times p'$.
We show below
that the sets $W_j$ cover $f(B)\times f'(B')$ which implies that $\secat_{f\times f'}[p\times p': E\times E'\to B\times B'] \leq k+m$.

Suppose that a point $(x,y)\in f(B) \times f'(B')$ is not in any of the sets $W_j$, where $j=0,\ldots,k+m$.
Since any $k+1$ sets
$U_j$ cover $f(B)$, we know that $x$ belongs to at least $m+1$ of the $U_j$, by Lemma \ref{lem:mplus1cov}.
Without loss of generality, we may assume that $x\in U_0 \cap U_1 \cap \ldots \cap U_m$. Then
$y \not \in V_0 \cup V_1 \cup \ldots \cup V_m$, in view of our assumption.
Therefore, $y$ can only lie in the sets
$V_{m+1},\ldots, V_{k+m}$ which is a contradiction since $y$ belongs to at least $k+1$ of the
sets $V_j$, by Lemma \ref{lem:mplus1cov}.
\end{proof}

\subsection{} Next we state another product inequality dealing with fibrations over the same base.
\begin{lemma}[Product inequality, II] Let $p: E\to B$ and $p':E'\to B$ be two fibrations, and let $f: B\to C$. We shall assume that $f(B)$ is normal in the topology induced from $C$. Then the sectional category
$$\secat_f[p\times_B p': E\times_B E'\to B]$$ of the fibrewise product is bounded below by $$\max\{\secat_f[p: E\to B],\, \secat_f[p': E'\to B]\}$$ and is bounded above by the sum $$\secat_f[p: E\to B]+\secat_f[p': E'\to B].$$
%
(2) Moreover,
$$\secat_f[p\times_B p': E\times_B E'\to B] = \secat_f[p: E\to B]$$
if $\secat[p': E'\to B]=0$, i.e. if $p'$ admits a section.
\end{lemma}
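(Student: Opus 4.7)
The plan is to mirror the structure of the previous product inequality (Lemma \ref{lm:prod1}): prove the lower bound by projection, the upper bound by an Ostrand-style extension of the given covers, and part (2) by pairing a section of $p$ with a global section of $p'$.

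For the lower bound, I would observe that the canonical projection $\pi_1 \colon E\times_B E' \to E$ is a map of fibrations over $B$ in the sense of Lemma \ref{lm:map1}: $p\circ \pi_1 = p\times_B p'$. Hence, given any open $U\subset C$ and any continuous section $\sigma\colon f^{-1}(U) \to E\times_B E'$ of $p\times_B p'$, the composition $\pi_1\circ \sigma$ is a continuous section of $p$ over the same set $f^{-1}(U)$. This immediately yields $\secat_f[p\times_B p'] \geq \secat_f[p]$, and the symmetric argument using $\pi_2$ gives the other half of the maximum.

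For the upper bound, set $k = \secat_f[p\colon E\to B]$ and $m = \secat_f[p'\colon E'\to B]$, realised respectively by open covers $\{U_0,\dots,U_k\}$ and $\{V_0,\dots,V_m\}$ of $f(B)\subset C$, with corresponding sections over the preimages. Using normality of $f(B)$ and Theorem \ref{thm:extopencov}, I would extend these to an $(m+1)$-cover $\{U_0,\dots,U_{k+m}\}$ and a $(k+1)$-cover $\{V_0,\dots,V_{k+m}\}$ of $f(B)$, still enjoying the sectioning property over each $f^{-1}(U_j)$ and $f^{-1}(V_j)$ (for $p$ and $p'$ respectively). Define $W_j = U_j \cap V_j$ for $j=0,\dots,k+m$. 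Over $f^{-1}(W_j) = f^{-1}(U_j)\cap f^{-1}(V_j)$ the sections $s_j$ and $s'_j$ of $p$ and $p'$ pair by $b\mapsto (s_j(b),s'_j(b))$ into a continuous section of $p\times_B p'$, because their projections to $B$ coincide. It remains to show $\{W_j\}$ covers $f(B)$: if some $x\in f(B)$ avoided every $W_j$, then for each $j$ either $x\notin U_j$ or $x\notin V_j$; but by Lemma \ref{lem:mplus1cov} the $(m+1)$-cover forces $x$ into at least $m+1$ of the $U_j$ and the $(k+1)$-cover forces $x$ into at least $k+1$ of the $V_j$, totalling $\geq k+m+2$ memberships among only $k+m+1$ indices, contradicting the assumption by pigeonhole.

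For part (2), a global section $\sigma\colon B\to E'$ of $p'$ allows, for any $U\subset C$ and any section $s\colon f^{-1}(U)\to E$ of $p$, to define $b\mapsto (s(b),\sigma(b))$ as a continuous section of $p\times_B p'$ over $f^{-1}(U)$, yielding $\secat_f[p\times_B p'] \leq \secat_f[p]$; combined with the lower bound from step one, equality follows. The main (mild) obstacle is bookkeeping in the pigeonhole step of the upper bound, and making sure the extensions produced by Theorem \ref{thm:extopencov} preserve the sectioning property over the preimages under $f$ --- this is immediate since each new set is a disjoint union of subsets of the old ones, and the property of admitting a section over $f^{-1}(\cdot)$ is inherited by open subsets and disjoint unions.
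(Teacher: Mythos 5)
Your proof is correct, but for the upper bound you take a genuinely different route than the paper. The paper does not redo an Ostrand-style covering argument; instead it observes that $p\times_B p'\colon E\times_B E'\to B$ is induced from the external product $p\times p'\colon E\times E'\to B\times B$ via the diagonal $\Delta_B$, applies Lemma \ref{lm:map2} (pullback monotonicity) and Lemma \ref{lm:homeo} (to replace $(f\times f)\circ\Delta_B$ by $f$, since $\Delta_C$ restricts to a homeomorphism of $f(B)$ onto its image), and then simply cites Product Inequality~I for $\secat_{f\times f}[p\times p']$. That reduction is shorter and showcases the calculus of $\secat_f$ developed in the paper, whereas your direct argument is more self-contained and makes visible exactly where normality of $f(B)$ enters (Theorem \ref{thm:extopencov}). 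Your treatment of part (2) by pairing a section of $p$ with a global section $\sigma$ of $p'$ is also more explicit than the paper's one-line remark that (2) follows by combining the two bounds with (\ref{comp2}).

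One slip you should fix: you call the extension of $\{U_0,\dots,U_k\}$ to $k+m+1$ sets an $(m+1)$-cover, and the extension of $\{V_0,\dots,V_m\}$ a $(k+1)$-cover. By Theorem \ref{thm:extopencov} it is the other way around: a cover of $k+1$ sets extends to a \emph{$(k+1)$-cover}, and a cover of $m+1$ sets extends to an \emph{$(m+1)$-cover}. With Lemma \ref{lem:mplus1cov} a $(k+1)$-cover of $k+m+1$ sets puts each point into at least $m+1$ of the $U_j$, and an $(m+1)$-cover of $k+m+1$ sets puts each point into at least $k+1$ of the $V_j$; these are precisely the counts you use, so the pigeonhole argument stands, but the labels you attach to the covers do not match the paper's Definition and Theorem. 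Also, be slightly careful that the Ostrand extension is applied to the cover $\{U_j\cap f(B)\}$ of the normal space $f(B)$, not to the ambient $C$; the resulting sets are open in $f(B)$, which is enough since $f^{-1}$ only depends on the trace on $f(B)$.
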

\begin{proof} The projection $pr: E\times_B E'\to E$ appears in the commutative diagram
$$\xymatrix{
 E \times_B E' \ar[dr]_{ p\times_B p'} \ar[rr]^{pr}  &&  E \ar[dl]^{p} \\
& B
}$$
and Lemma \ref{lm:map1} gives $\secat_f[p\times_B p': E\times_B E'\to B]\ge \secat_f[p: E\to B]$.
Similarly one gets the lower bound using $\secat_f[p': E'\to B]$, which proves the statement concerning the lower bound.
Next we note that
\begin{eqnarray}\label{ineq}
\secat_f[p\times_B p': E\times_B E'\to B]\le \secat_{f\times f}[p\times p': E\times E'\to B\times B].
\end{eqnarray}
Indeed, the fibration $p\times_B p': E\times_B E'\to B$ is induced from the product fibration $p\times p': E\times E'\to B\times B$
by the diagonal map $\Delta: C\to C\times C$.
Lemma \ref{lm:map2} gives the inequality
$\secat_{(f\times f)\circ \Delta}[p\times_B p': E\times_B E'\to B]\le \secat_{f\times f}[p\times p': E\times E'\to B\times B]$. Finally, we can apply Lemma \ref{lm:homeo} and replace $(f\times f)\circ \Delta$ by $f$. Combining (\ref{ineq}) with Lemma \ref{lm:prod1} we obtain the upper bound.

Statement (2) obviously follows by combining the lower and upper bounds.
\end{proof}

\section{Weak equivariant topological complexity $\TC^w_{r,G}(X)$}\label{sec7}

Let $p: E\to B$ be a bundle with fibre $X$ and structure group $G$ which is associated
to a principal bundle $\tau: P\to B$. In other words,
$E=X\times _G P.$

As in \S \ref{sec2}, we fix $r\ge 2$ points $0=t_1< t_2<\dots<t_r=1$ and consider the evaluation map
$$\rho_r: X^I\to X^r, \quad \rho_r(\gamma)=(\gamma(t_1), \gamma(t_2),\dots, \gamma(t_r)) \quad \mbox{where}\quad \gamma\in X^I.$$
Consider also the quotient map $$q_r: X^r\to X^r/G,$$ where we view $G$ acting diagonally on $X^r$.

The following invariant plays an important role in our main Theorem \ref{thm:main}:
\begin{eqnarray}\label{wtcr}
\TC^w_{r, G}(X) = \secat_{q_r}[\rho_r: X^I\to X^r].
\end{eqnarray}
Explicitly, we have:

\begin{definition} \label{deftcw} The invariant
$\TC^w_{r, G}(X)$ equals the smallest integer $k\ge 0$ such that $X^r$ admits an open cover $X^r=U_0\cup U_1\cup\dots\cup U_k$ by $G$-invariant open sets such that for each $i=0, 1, \dots, k$ there is a continuous section $s_i: U_i\to X^I$ of $\pi_r$. \end{definition}

Note that the section $s_i$ in Definition \ref{deftcw} is not required to be $G$-equivariant, unlike in the case of $\TC_{r, G}(X)$. This explain the adjective {\it "weak"} and the symbol  {\it \lq\lq w\rq\rq} in the notation.
We obviously have
\begin{eqnarray}\label{compare}
\TC_r(X)\, \le \, \TC^w_{r, G}(X) \, \le \, \TC_{r, G}(X),
\end{eqnarray}
where the left inequality is a special case of (\ref{comp1}).
All these inequalities become equalities when the action of $G$ is trivial.
\begin{lemma} \label{lm:41} For any $G$-space $P$ one has
$$\TC^w_{r, G}(X) = \secat_{q_r\times \epsilon}[\rho_r\times 1: X^I\times P\to X^r\times P]$$
where $\epsilon: P\to \ast$ is the map onto a singleton.
\end{lemma}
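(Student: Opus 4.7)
The plan is to observe that the auxiliary map $q_r\times\epsilon$ factors as $(q_r\times\epsilon)(x,p)=(q_r(x),\ast)$, so identifying $X^r/G\times\{\ast\}$ with $X^r/G$ we have $(q_r\times\epsilon)^{-1}(V)=q_r^{-1}(V)\times P$ for every open $V\subset X^r/G$. Under this identification the $(q_r\times\epsilon)$-saturated open covers of $X^r\times P$ are in canonical bijection with the $q_r$-saturated open covers of $X^r$, so the whole statement reduces to matching up sections on either side, with $P$ acting as a passive factor. I would assume throughout that $P$ is nonempty; otherwise both invariants are vacuously $0$ (or undefined) and there is nothing to prove.

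For the inequality $\secat_{q_r\times\epsilon}[\rho_r\times 1]\le \TC^w_{r,G}(X)$, I would take an open cover $X^r/G=V_0\cup\cdots\cup V_k$ realizing $\TC^w_{r,G}(X)=k$, together with continuous sections $s_i\colon q_r^{-1}(V_i)\to X^I$ of $\rho_r$. The same family $V_0,\ldots,V_k$ still covers $(q_r\times\epsilon)(X^r\times P)=q_r(X^r)=X^r/G$, and the product maps
\[
s_i\times\mathrm{id}_P\colon q_r^{-1}(V_i)\times P\longrightarrow X^I\times P
\]
are visibly continuous sections of $\rho_r\times 1$ over $(q_r\times\epsilon)^{-1}(V_i)$.

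For the reverse inequality, suppose $V_0,\ldots,V_k\subset X^r/G$ is an open family covering $(q_r\times\epsilon)(X^r\times P)=q_r(X^r)=X^r/G$ with continuous sections $\tilde s_i\colon q_r^{-1}(V_i)\times P\to X^I\times P$ of $\rho_r\times 1$. Fix any basepoint $p_0\in P$ and define
\[
s_i\colon q_r^{-1}(V_i)\longrightarrow X^I,\qquad s_i(x)=\mathrm{pr}_{X^I}\bigl(\tilde s_i(x,p_0)\bigr).
\]
Since $\tilde s_i$ is a section of $\rho_r\times 1$, we have $\tilde s_i(x,p_0)=(s_i(x),p_0)$ with $\rho_r(s_i(x))=x$, so each $s_i$ is a continuous section of $\rho_r$ over the $G$-invariant open set $q_r^{-1}(V_i)$. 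This shows $\TC^w_{r,G}(X)\le k$ and completes the equality.

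There is essentially no obstacle here beyond careful bookkeeping: the entire content is that $\epsilon\colon P\to\ast$ contributes nothing to the saturation condition, so $P$ can be inserted or deleted via the product-with-$\mathrm{id}_P$/evaluate-at-$p_0$ correspondence. The only minor point to flag is the nonemptiness of $P$, needed so that the image of $q_r\times\epsilon$ is all of $X^r/G$.
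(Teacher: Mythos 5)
Your proof is correct, but it proceeds along a different path than the paper. The paper derives the equality in one line from the Product Inequality I (Lemma \ref{lm:prod1}): taking $p=\rho_r$, $f=q_r$ and $p'=\mathrm{id}_P$, $f'=\epsilon$, the upper and lower bounds in that lemma both collapse to $\secat_{q_r}[\rho_r]$ because $\secat_\epsilon[\mathrm{id}:P\to P]=0$. Your argument instead works directly from the definitions, exploiting the explicit bijection between $(q_r\times\epsilon)$-saturated open sets (which are exactly those of the form $q_r^{-1}(V)\times P$) and $q_r$-saturated open sets, and matching up sections via $s_i\mapsto s_i\times\mathrm{id}_P$ and $\tilde s_i\mapsto \mathrm{pr}_{X^I}\circ\tilde s_i(-,p_0)$. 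What the paper's route buys is brevity within its established \lq\lq calculus of sectional categories\rq\rq; what your route buys is self-containment and, more substantively, the removal of a hidden hypothesis: Lemma \ref{lm:prod1} assumes normality of $f(B)=X^r/G$ (needed for Ostrand's extension theorem), whereas your direct argument requires nothing beyond nonemptiness of $P$, since no cover-extension machinery is invoked when one of the two factors has sectional category $0$. Your flag about the nonemptiness of $P$ is the right thing to note, and otherwise the bookkeeping is accurate throughout.
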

\begin{proof}
This follows from Lemma \ref{lm:prod1} since clearly $\secat_\epsilon[1: P\to P]=0$.
\end{proof}
Next we state the dimension-connectivity upper bound:
\begin{lemma}\label{lm:63}
Assume that $X$ is a $k$-connected simplicial complex and $G$ is a topological group
homeomorphic to a CW-complex acting freely on $X$ and such that the map $q_r: X^r\to X^r/G$ is a locally trivial bundle.
If $\dim G\le k$ then
\begin{eqnarray}
\TC^w_{r, G}(X) \le \left\lceil
\frac{r\dim X -k }{1+k-\dim G}
\right\rceil
\end{eqnarray}
\end{lemma}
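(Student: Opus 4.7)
The plan is to identify the bound as a direct application of Proposition \ref{lm:upper} to the pair of maps $\rho_r : X^I \to X^r$ and $q_r : X^r \to X^r/G$, since the definition (\ref{wtcr}) gives $\TC^w_{r,G}(X) = \secat_{q_r}[\rho_r : X^I \to X^r]$. Here $p = \rho_r$ is a Hurewicz fibration with fibre $(\Omega X)^{r-1}$, while $f = q_r$ is a locally trivial bundle with fibre $G$ over $C = X^r/G$.

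First I would match the parameters of Proposition \ref{lm:upper} to ours. The connectivity parameter in hypothesis (b) equals our $k$: since $X$ is $k$-connected, $\Omega X$ is $(k-1)$-connected, and the $(r-1)$-fold Cartesian product $(\Omega X)^{r-1}$ is then also $(k-1)$-connected. The dimension parameter $d$ of hypothesis (c) is $\dim G$, and the required inequality $d \le k$ is exactly the standing assumption $\dim G \le k$. For hypothesis (a), $G$ is a CW-complex by assumption, and $X^r/G$ inherits CW-homotopy type from $X^r$ (a simplicial complex) and $G$ through the locally trivial quotient $q_r$. Since $\dim X^r = r \dim X$ for a finite-dimensional simplicial complex, Proposition \ref{lm:upper} yields
$$\secat_{q_r}[\rho_r : X^I \to X^r] \, \le \, \left\lceil \frac{r \dim X - k}{1 + k - \dim G}\right\rceil,$$
which is precisely the stated upper bound on $\TC^w_{r,G}(X)$.

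The step I expect to be most delicate is the verification that the orbit space $X^r/G$ has the homotopy type of a CW-complex. The reference to Theorem 5.4.2 of \cite{FPbook} used in the proof of Lemma \ref{lm:hdim} propagates CW-homotopy type from base to total space of a fibration, whereas here one needs the reverse passage: CW-homotopy type of the base of a locally trivial bundle from CW total space and CW fibre. For free actions of compact Lie groups this is standard (via Palais), and in the present generality it can be assembled by gluing local trivializations of $q_r$ via a partition of unity on a paracompact base. Once this technical point is addressed, the rest of the argument is a purely formal invocation of the calculus of sectional categories developed in Section \ref{sec4}.
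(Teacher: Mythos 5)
Your proposal is correct and takes essentially the same approach as the paper: the paper's proof is the one-line statement that one applies Proposition~\ref{lm:upper} to $\rho_r$ and $q_r$, noting the fibre $(\Omega X)^{r-1}$ is $(k-1)$-connected, which is exactly your argument spelled out. You even flag (and sketch a resolution of) the hypothesis that $X^r/G$ has CW-homotopy type, a point the paper's proof passes over silently.
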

\begin{proof} We apply Proposition \ref{lm:upper} having in mind that the fibre $(\Omega X)^{r-1}$ of fibration $\rho_r$ is $(k-1)$-connected.
\end{proof}
As a special case of Lemma \ref{lm:63} we mention:
\begin{corollary}
If $X$ is $k$-connected, where $k\ge 0$, and the group $G$ is discrete and the quotient map
$q_r: X^r\to X^r/G$ is a covering map then
\begin{eqnarray}\label{upper3}
\TC^w_{r, G}(X) \le  \, \left\lceil \frac{r \dim X-k}{1+k}\right\rceil
\end{eqnarray}
\end{corollary}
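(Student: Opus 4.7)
The plan is to invoke Lemma~\ref{lm:63} directly, treating this corollary as the specialization obtained by setting $\dim G = 0$. I would first verify that the hypotheses of Lemma~\ref{lm:63} are satisfied in the discrete-group setting. A discrete group is tautologically a $0$-dimensional CW-complex, so the CW-structure hypothesis on $G$ is met, and $\dim G = 0 \le k$ holds automatically since $k \ge 0$. Moreover, a covering map is by definition a locally trivial bundle with discrete fibre, and the assumption that $q_r\colon X^r \to X^r/G$ is a covering implies that $G$ acts freely on $X^r$ via its diagonal action (as the group of deck transformations of $q_r$). Thus the local-triviality and free-action hypotheses of Lemma~\ref{lm:63} are all in force.

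With the hypotheses in place, Lemma~\ref{lm:63} yields the bound $\left\lceil \frac{r\dim X - k}{1+k - \dim G}\right\rceil$, and substituting $\dim G = 0$ gives exactly
$$\TC^w_{r, G}(X) \le \left\lceil \frac{r\dim X - k}{1+k}\right\rceil,$$
as claimed. The only point that warrants attention is that Lemma~\ref{lm:63} is stated under the hypothesis that $X$ is a simplicial complex; the corollary inherits this (reasonable) assumption implicitly, since $\dim X$ is already being invoked. There is no genuine obstacle here: the statement is a formal consequence of Lemma~\ref{lm:63} in the common and convenient case of a discrete structure group, recorded separately so that later applications (where covering spaces appear naturally) can cite it without re-deriving the arithmetic.
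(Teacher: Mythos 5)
Your proposal is correct and is essentially the paper's own (one-line) argument: the paper explicitly presents this corollary as the special case of Lemma~\ref{lm:63} obtained by taking $\dim G=0$, and your verification that a discrete group is a $0$-dimensional CW-complex, that $0\le k$, and that a covering map is a locally trivial bundle with discrete fibre supplies exactly the checks needed. The only nuance you glossed over—Lemma~\ref{lm:63} asks for a free action on $X$, while the covering hypothesis most directly gives freeness of the diagonal action on $X^r$—is immediate, since a non-trivial stabiliser of $x\in X$ would stabilise $(x,\dots,x)\in X^r$.
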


We shall be discussing yet another invariant $\TC_{r, G}^w(X; P)$ given by
\begin{eqnarray}\label{tcrgwp}
\TC_{r, G}^w(X; P)= \secat_{Q}[\rho_r\times 1: X^I\times P\to X^r\times P]
\end{eqnarray}
with
$Q: X^r\times P \to X^r\times_G P$ being the natural projection; here $X$ and $P$ are $G$-spaces and $\rho_r$ is the fibration (\ref{rhor}).
Comparing with Lemma \ref{lm:41} we see that it is similar to
$\TC_{r,G}^w(X)$ with the only distinction that the map $q_r\times \epsilon$ is replaced by $Q$.
\begin{lemma} One has
\begin{eqnarray}\label{tcrq}
\TC_r(X) \, \le \, \TC_{r, G}^w(X; P)\, \le \, \TC_{r, G}^w(X).
\end{eqnarray}
\end{lemma}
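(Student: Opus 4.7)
The plan is to prove the two inequalities separately. The right-hand one is essentially a formal consequence of Lemma \ref{lm:homeo}(a) combined with Lemma \ref{lm:41}, while the left-hand one is a specialization/restriction argument using a single chosen point of $P$.

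For the right inequality $\TC_{r,G}^w(X;P) \le \TC_{r,G}^w(X)$, I would start by rewriting $\TC_{r,G}^w(X)$ as $\secat_{q_r \times \epsilon}[\rho_r \times 1 : X^I \times P \to X^r \times P]$ using Lemma \ref{lm:41}, so that both invariants are $\secat_{(-)}$ of the \emph{same} fibration $\rho_r \times 1$ but with respect to different maps out of $X^r \times P$. Next I would define $h : X^r \times_G P \to X^r/G$ by $h([x,p]) = q_r(x)$ (well-defined because $G$ acts diagonally) and identify $X^r/G \times \{*\}$ with $X^r/G$. Then $h \circ Q = q_r \times \epsilon$, so Lemma \ref{lm:homeo}(a) immediately gives $\secat_Q \le \secat_{q_r \times \epsilon}$, which is the desired inequality.

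For the left inequality $\TC_r(X) \le \TC_{r,G}^w(X;P)$, fix a point $p_0 \in P$ and consider the continuous map $\iota : X^r \to X^r \times_G P$, $\iota(x) = [x,p_0]$. Given an open cover $\{U_0,\dots,U_k\}$ of $X^r \times_G P$ such that each $Q^{-1}(U_i)$ admits a section $s_i : Q^{-1}(U_i) \to X^I \times P$ of $\rho_r \times 1$, set $W_i = \iota^{-1}(U_i) \subset X^r$. These sets are open and cover $X^r$ because $[x,p_0] \in X^r \times_G P$ must lie in some $U_i$. For $x \in W_i$ the pair $(x,p_0)$ lies in $Q^{-1}(U_i)$, so $s_i(x,p_0) \in X^I \times P$ is defined; because $s_i$ is a section of the product fibration $\rho_r \times 1$, the second coordinate of $s_i(x,p_0)$ must be $p_0$ and $\rho_r$ of the first coordinate must equal $x$. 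Projecting to the first factor therefore yields a continuous section $\tilde s_i : W_i \to X^I$ of $\rho_r$, and so $\TC_r(X) = \secat[\rho_r] \le k = \TC_{r,G}^w(X;P)$.

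Neither step presents a genuine obstacle; the proof is a direct unpacking of definitions together with the $\secat_f$ calculus of Section \ref{sec4}. The only point requiring slight care is the verification that the restricted map $x \mapsto s_i(x,p_0)$ genuinely lands in a slice $X^I \times \{p_0\}$ and that its first-factor projection is a bona fide section of $\rho_r$, which is forced by $s_i$ being a section of the product map $\rho_r \times 1$.
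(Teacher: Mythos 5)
Your proof is correct, and it genuinely differs from the paper's in its second half, so a comparison is worthwhile. For the right inequality both you and the paper ultimately invoke Lemma~\ref{lm:homeo}(a); the organizational difference is that you first apply Lemma~\ref{lm:41} to express $\TC^w_{r,G}(X)$ as $\secat_{q_r\times\epsilon}[\rho_r\times 1]$ and then factor $q_r\times\epsilon$ through $Q$, whereas the paper goes the other way, starting from $\secat_{q_r}[\rho_r]$ and pulling back to $X^I\times P$ via Lemma~\ref{lm:map2} before applying Lemma~\ref{lm:homeo}(a) to the factorization $q_r\circ p_2 = p_3\circ Q$; these are essentially the same chain written in opposite order. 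The real divergence is in the left inequality. The paper stays entirely inside the $\secat_f$ calculus: it uses inequality~(\ref{comp1}) to drop the subscript $Q$ and then Product Inequality I (Lemma~\ref{lm:prod1}) to identify $\secat[\rho_r\times 1]=\secat[\rho_r]=\TC_r(X)$. You instead fix a basepoint $p_0\in P$, restrict a cover of $X^r\times_G P$ along the slice $\iota(x)=[x,p_0]$, and observe that sections of the product fibration over $Q^{-1}(U_i)$ automatically project to sections of $\rho_r$ over $\iota^{-1}(U_i)$. This is a direct, elementary argument that avoids the product-cover machinery of \S\ref{sec6} entirely, at the modest cost of leaving the abstract calculus for a moment. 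Both routes are correct; yours is more hands-on and self-contained, while the paper's is more uniform in spirit with the rest of the section.
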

\begin{proof}
Consider the following commutative diagram
$$\xymatrix{
X^I\times P\ar[r]^{\rho_r\times 1} \ar[d]_{p_1} &X^r\times P\ar[d]_{p_2}\ar[r]^Q & X^r\times_G P\ar[d]^{p_3}\\
X^I \ar[r]_{\rho_r} & X^r\ar[r]_{q_r} & X^r/G
}$$
where the maps $p_1, p_2, p_3$ are projections on the first factor. Since the fibration $\rho_r\times 1$ is induced from $\rho_r$ via $p_2$, we may apply Lemma \ref{lm:map2} to conclude
\begin{eqnarray*}
\TC^w_{r, G}(X) &=& \secat_{q_r}[\rho_r: X^I\to X^r] \\
&\ge&\secat_{q_r\circ p_2}[\rho_r\times 1: X^I\times P\to X^r\times P] \\
&=&\secat_{p_3\circ Q}[\rho_r\times 1: X^I\times P\to X^r\times P] \\
&\ge&\secat_{Q}[\rho_r\times 1: X^I\times P\to X^r\times P]\\
&=& \TC^w_{r, G}(X; P).
\end{eqnarray*}
On the third line we used Lemma \ref{lm:homeo}, part (a). This proves the right inequality in (\ref{tcrq}).
The left inequality follows from
\begin{eqnarray*}
\TC^w_{r, G}(X;P)&=& \secat_{Q}[\rho_r\times 1: X^I\times P\to X^r\times P] \\
&\ge& \secat[\rho_r\times 1: X^I\times P\to X^r\times P]\\
&=&
\secat[\rho_r: X^I\to X^r]\\
&=& \TC_r(X)
\end{eqnarray*}
where on the second line we used inequality (\ref{comp1}) and on the third line Lemma \ref{lm:prod1}.
\end{proof}

Next result gives a dimension-connectivity upper bound for $\TC_{r, G}^w(X;P)$ which holds for weaker assumptions on $X$ compared to Lemma \ref{lm:63}.
\begin{lemma}\label{lm:76}
Assume that $X$ is a $k$-connected simplicial complex and $G$ is a topological group
homeomorphic to a CW-complex. Suppose that $P\to P/G$ is a locally trivial bundle.
If $\dim G\le k$ then
\begin{eqnarray}
\TC^w_{r, G}(X;P) \le \left\lceil
\frac{r\dim X +\dim P -k }{1+k-\dim G}
\right\rceil.
\end{eqnarray}
\end{lemma}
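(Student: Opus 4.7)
The plan is to apply Proposition \ref{lm:upper} directly, taking the fibration there to be
$$\rho_r\times 1\colon X^I\times P \longrightarrow X^r\times P$$
and the map $f$ there to be the quotient map
$$Q\colon X^r\times P \longrightarrow X^r\times_G P,$$
so that the LHS of Proposition \ref{lm:upper} becomes exactly $\TC^w_{r,G}(X;P)$ by the defining equation (\ref{tcrgwp}). With this identification, the numerator and denominator in the upper bound from Proposition \ref{lm:upper} will, after checking the dimension and connectivity data below, match the claimed expression.

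First I would verify the hypotheses of Proposition \ref{lm:upper} one by one. For hypothesis (b), the fibre of $\rho_r\times 1$ is the same as the fibre of $\rho_r\colon X^I\to X^r$, namely $(\Omega X)^{r-1}$; since $X$ is $k$-connected, $\Omega X$ is $(k-1)$-connected and so is its $(r-1)$-fold Cartesian power, giving the required $(k-1)$-connectivity. For hypothesis (c), the fibre of $Q$ is $G$ with $\dim G\le k$ by assumption, so I set $d=\dim G$. To see that $Q$ is locally trivial with this fibre, observe that the diagonal action of $G$ on $X^r\times P$ is free because $G$ acts freely on $P$ (as $P\to P/G$ is a locally trivial bundle), and any local trivialization $P|_U\cong U\times G$ of $P\to P/G$ pulls back to a local trivialization of $Q$ over $X^r\times U\subset X^r\times_G P$. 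For hypothesis (a), $G$ has CW-homotopy type by assumption, and $X^r\times_G P$ is the total space of a fibre bundle with fibre $X^r$ (a CW-complex) over $P/G$, which places it within the scope of Proposition \ref{lm:upper}.

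With the hypotheses in hand, the remaining computation is routine: the base of $\rho_r\times 1$ has dimension $\dim(X^r\times P)=r\dim X+\dim P$, and inserting this together with $d=\dim G$ into the bound supplied by Proposition \ref{lm:upper} yields
$$\TC^w_{r,G}(X;P)=\secat_Q[\rho_r\times 1\colon X^I\times P\to X^r\times P]\,\le\,\left\lceil \frac{r\dim X+\dim P-k}{1+k-\dim G}\right\rceil,$$
which is exactly the stated inequality.

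The main place to be careful is the verification that $Q$ genuinely satisfies the ``locally trivial bundle with CW-fibre over a CW-target'' assumption of Proposition \ref{lm:upper}; once that is in place, the argument is a one-line application. Compared to Lemma \ref{lm:63}, which needed $q_r\colon X^r\to X^r/G$ to be locally trivial (a condition on how $G$ acts on $X$), here the freeness and local triviality are transported from the principal bundle $P\to P/G$, explaining the weaker hypothesis on $X$.
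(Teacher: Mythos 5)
Your proof is correct and follows exactly the same route as the paper: the paper's argument for this lemma is literally the one-liner "apply Proposition \ref{lm:upper} to the definition (\ref{tcrgwp})," and your write-up simply spells out the verification of the hypotheses (identifying the fibre of $\rho_r\times 1$ as $(\Omega X)^{r-1}$, identifying the fibre of $Q$ as $G$, deducing local triviality of $Q$ from that of $P\to P/G$, and computing $\dim(X^r\times P)=r\dim X+\dim P$) that the authors leave implicit.
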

\begin{proof} This follows by applying Proposition \ref{lm:upper} to the definition (\ref{tcrgwp}).
\end{proof}

\begin{example}\label{ex:77} 
Consider the unit circle $S^1\subset \mathbb C$ with the action of the cyclic group of order two $G=\mathbb Z_2$ acting as the complex conjugation, $z\mapsto \bar z$. We know from \cite{CG} that in this case $\TC_{2, G}(S^1)$ is infinite due to the fact that the set of fixed points is disconnected.

On the other hand one can consider the following open cover $S^1\times S^1 = U_0\cup U_1$ where
$U_0=\{(z_1, z_2); z_1\not=-z_2\}$ and $U_1=\{(z_1, z_2); z_1\not=z_2\}$. These sets are $G$-invariant and over each of these sets one has the well known continuous sections. Thus, we obtain that $\TC_{2, G}^w(S^1)= 1$.
\end{example}

\begin{example} Consider the more general case of a sphere $S^n$, where $n\ge 1$, with an action of a discrete group $G$.
First we apply the upper bound (\ref{upper3}) with $k=n-1$ to obtain
$$\TC_{r, G}^w(S^n) \le r \quad \mbox{for any}\quad r\ge 2.$$
Secondly, using (\ref{tcrq}) and the result of Y. Rudyak \cite{Ru} (stating that $\TC_r(S^n)$ equals $r$ for $n$ even and $r-1$ for $n$ odd), we obtain that for any even $n$
\begin{eqnarray}
\TC_{r, G}^w(S^n) = r.
\end{eqnarray}
For $n$ odd our inequalities imply that $\TC_{r, G}^w(S^n)$ equals either $r-1$ or $r$.
\end{example}

\begin{example}\label{exam:s1s2}
Let $S^1$ act on $S^2$ by
rotations about the $z$-axis. The fixed point set of the action
is the disconnected set $\{N,S\}$, where $N$ and $S$ are the North and South Poles
respectively, so the equivariant topological complexity is infinite, $\TC_{r, S^1}(S^2)=\infty$ for all $r\geq 2$.

Let us now examine the weak equivariant topological complexity $\TC_{2,S^1}^w(S^2)$. Fix an orbit $O\subset S^2$ given by the equator and 
fix an orientation of $O$.
Consider the open cover $S^2\times S^2= U_0\cup U_1\cup U_2$
where
\begin{eqnarray*}
U_0&=&\{(x,y)\,|\, x\not = -y\},\\
U_1&=&\{(x,y)\,|\, x\not = y\}-\{(N,S), (S,N)\},\\
U_2&=& \{(x,y)\,|\, x\not\in O\, \mbox{and}\, y\not\in O\}.
\end{eqnarray*}
Clearly, the sets $U_0, U_1, U_2$ are $S^1$-invariant. We may define the motion planning rules over each of the sets $U_i$ as follows.
For $(x,y)\in U_0$, go from $x$ to $y$ along the shortest geodesic arc. For $(x, y)\in U_1$ the point $x$ moves along the shortest geodesic arc first to the closest point of $O$, then along $O$ in the positive direction to the point closest to $y$, and finally to $y$. For $(x, y)\in U_2$ the point $x$ moves along the shortest geodesic arc to the closest Pole ($N$ or $S$), then to the closest Pole to $y$  along a fixed path and then to $y$; the first and the third portions are along the shortest geodesic arc on the sphere $S^2$. Hence $\TC_{2,S^1}^w(S^2) \leq 2$.
Since $2=\TC(S^2) \leq \TC_{2,S^1}^w(S^2)$, we see that $\TC_{2, S^1}^w(S^2)=2$.\end{example}

\section{Bounds for the sequential parametrized topological complexity}

Finally we are in position to state and prove the main result of this paper:

\begin{theorem}\label{thm:main}
Let $p: E\to B$ be a locally trivial fibre bundle with structure group $G$, the fibre $X$ and the associated principal bundle $\tau: P\to B$. Then the sequential parametrized topological
complexity $\TC_r[p: E\to B]$
admits the following upper and lower bounds
\begin{eqnarray}\label{ineqmain}
\TC_{r, G}^w(X; P) \, \le \, \TC_r[p: E\to B] \, \le \, G\mbox{-}\cat[p: E\to B] \, +\, \TC_{r, G}^w(X;P).
\end{eqnarray}
\end{theorem}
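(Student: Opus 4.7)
The plan is to exploit the identifications from the proof of Theorem \ref{thm:equiv} realising $E^r_B$, $E^I_B$ and $\Pi_r$ as $X^r\times_G P$, $X^I\times_G P$ and $\rho_r\times_G 1$ respectively, together with the observation that the diagram of orbit projections (horizontal) and evaluation maps (vertical)
$$\xymatrix{
X^I\times P\ar[r]^{\tilde Q}\ar[d]_{\rho_r\times 1}&X^I\times_G P\ar[d]^{\rho_r\times_G 1}\\
X^r\times P\ar[r]_Q&X^r\times_G P
}$$
is a pullback square (this follows at once from freeness of the $G$-action on $P$, where $\tilde Q$ is the $G$-orbit projection on paths-times-$P$). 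Equivalently, $Q$ is the principal $G$-bundle obtained from $\tau:P\to B$ by pulling back along the projection $\pi:E^r_B\to B$.

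For the lower bound, the pullback square displays $\rho_r\times 1$ as the fibration induced from $\rho_r\times_G 1$ by $Q$. Applying Lemma \ref{lm:map2} with auxiliary map $f=\Id_{X^r\times_G P}$ (so $f'=Q$) yields
$$\TC^w_{r,G}(X;P)=\secat_Q[\rho_r\times 1]\le \secat[\rho_r\times_G 1]=\TC_r[p:E\to B].$$

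For the upper bound, set $k=G\mbox{-}\cat[p:E\to B]=\secat[\tau]$ and $m=\TC^w_{r,G}(X;P)$. Local sections $\sigma_i:V_i\to P$ of $\tau$ on a cover $V_0,\dots,V_k$ of $B$ pull back via $\pi$ to sections of $Q$ over the cover $\{\pi^{-1}(V_i)\}$ of $E^r_B$. Simultaneously, a cover $W_0,\dots,W_m$ of $E^r_B$ realising $\TC^w_{r,G}(X;P)=m$ provides sections $s_j:Q^{-1}(W_j)\to X^I\times P$ of $\rho_r\times 1$. We then apply Lemma \ref{lm1} to $C=E^r_B$ with two properties of an open subset $U\subset E^r_B$: \emph{(A)} $Q$ admits a continuous section over $U$; \emph{(B)} $\rho_r\times 1$ admits a continuous section over $Q^{-1}(U)$. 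Both properties are hereditary under open subsets and disjoint unions, and the two covers above realise them with $k+1$ and $m+1$ sets respectively. Lemma \ref{lm1} then delivers an open cover of $E^r_B$ by $k+m+1$ sets each satisfying both (A) and (B). Over any such $U$, with sections $\psi:U\to X^r\times P$ of $Q$ and $s:Q^{-1}(U)\to X^I\times P$ of $\rho_r\times 1$, the composite $\tilde Q\circ s\circ\psi:U\to X^I\times_G P=E^I_B$ is a section of $\Pi_r=\rho_r\times_G 1$ by commutativity of the pullback square. Hence $\secat[\Pi_r]\le k+m$.

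The main obstacle is the verification that the displayed square is genuinely a pullback: once this is in hand, both inequalities follow formally by invoking the $\secat_f$ calculus of Section \ref{sec4} (for the lower bound) and the combinatorial Lemma \ref{lm1} together with the bundle-theoretic content of $G\mbox{-}\cat$ (for the upper bound).
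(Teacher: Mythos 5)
Your proposal is correct and takes essentially the same approach as the paper: for the lower bound you both apply Lemma \ref{lm:map2} to the pullback square relating $\rho_r\times 1$ and $\rho_r\times_G 1$; for the upper bound, where the paper invokes Lemma \ref{lm:map1} together with Theorem \ref{lm:tower}, you unroll those two lemmas into a direct application of Lemma \ref{lm1} and an explicit composition of sections, which is the same argument written out in full rather than cited.
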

\begin{proof} Since $E=X\times_G P$ we have
$$E^r_B = X^r\times_G P \quad \mbox{ and}\quad E^I_B=X^I\times_G P \quad \mbox{for any}\quad r\ge 2.$$
The map $\Pi_r: E^I_B \to E^r_B$ becomes
$$\rho_r\times 1: X^I\times_G P \to X^r\times_G P \quad \mbox{where}\quad \rho_r(\gamma)=(\gamma(t_0), \dots, \gamma(t_r)). $$

Consider the commutative diagram
\begin{eqnarray}\label{diag1}
\xymatrix{
X^I\times P \ar[r]^{Q'}\ar[d]_{\rho_r\times 1} & X^I\times_G P\ar[d]^{\rho_r\times_G 1}\\
X^r\times P \ar[r]_Q & X^r\times_GP
}\end{eqnarray}
where $Q: X^r\times P \to X^r\times_G P$ and $Q': X^I\times P \to X^I\times_G P$
are the natural projections.
Using Lemma \ref{lm:map1} and Theorem \ref{lm:tower} we have
\begin{eqnarray*}{\sf TC}_r[p:E\to B]&=&\secat[\rho_r\times_G 1: X^I\times_G P \to X^r\times_G P]\\
&\le& \secat[(\rho_r\times_G 1)\circ Q': X^I\times P\to X^r\times_G P]\\
&=& \secat[(Q\circ (\rho_r\times 1): X^I\times P\to X^r\times_G P]\\
&\le& \secat[Q: X^r\times P\to X^r\times_G P] +\secat_Q[\rho_r\times 1: X^I\times P\to X^r\times P]
\end{eqnarray*}
 Next we observe that
$$\secat[X^r\times P\to X^r\times_G P]\le \secat[\tau: P\to B]=G\mbox{-}{\sf cat}[p:E\to B]$$
and
$$\secat_Q[X^I\times P\to X^r\times P]=\TC^w_{r, G}(X;P).$$ 
Thus, we obtain the right inequality in (\ref{ineqmain}).

For the left inequality (\ref{ineqmain}) we consider again diagram (\ref{diag1}) and observe that the fibration
$\pi_r\times 1: X^I\times P\to X^r\times P$ is induced from $\pi_r\times_G 1: X^I\times_G P\to X^r\times_G P$ via $Q$. Therefore, using  Lemma \ref{lm:map2} we obtain
\begin{eqnarray*}
\TC_r[p: E\to B]&=& \secat[\rho_r\times 1: X^I\times_G P \to X^r\times_G P] \\
&\ge& \secat_Q[\rho_r\times 1: X^I\times P \to X^r\times P]= \TC^w_{r, G}(X;P).
\end{eqnarray*}
This completes the proof.
\end{proof}
\begin{remark} Due to the right inequality (\ref{tcrq}), the upper bound in (\ref{ineqmain}) gives
\begin{eqnarray}\label{ineq:main2}
\TC_r[p: E\to B] \, \le \, G\mbox{-}\cat[p: E\to B] \, +\, \TC_{r, G}^w(X).\end{eqnarray}
The RHS of this inequality has two terms, one depending only on the initial bundle $p:E\to B$ and the other depending only on the fibre, $X$ viewed as a $G$-space.
\end{remark}

Theorem \ref{thm:main} implies that for the trivial bundle $p:E\to B$ with fibre $X$ one has \newline $\TC_r[p:E\to B]=\TC_r(X)$, see
Example 3.2 from \cite{FP}.
Indeed, in this case $G$-$\cat[p:E\to B]=0$ and $\TC_{r, G}^w(X, P)=\TC_r(X)$ and hence the statement follows from (\ref{ineqmain}).

\begin{example}
The Klein bottle $K$ is the total space of the bundle
$p: K = S^1 \times_{\Z/2} S^1 \to S^1$
with the associated principal bundle the $2$-fold covering
$\tau: S^1 \to S^1$ and the action of $G=\Z/2$ on the fibre $S^1$ being given by
reflection in the last coordinate. The inequality (\ref{ineq:main2}) with $r=2$ and the result of Example \ref{ex:77} give
\begin{eqnarray}\label{klein}
\TC[p: K \to S^1] = \TC_2[p: K \to S^1] \leq 1 + 1 = 2.
\end{eqnarray}

Mark Grant observed that (\ref{klein}) is in fact an equality. The inequality $\TC[p: K \to S^1]\ge 2$ can be obtained by applying Theorem 2 from \cite{FW2022}.  The bundle $p: K\to S^1$ is the unit sphere bundle of a rank 2 vector bundle $\xi$ over the circle $S^1$. One has $\rm w_2(\xi)=0$ (for dimensional reasons) and $\rm w_1(\xi)\not=0$ (since $\xi$ is not orientable) and therefore the relative height 
$\mathfrak h(\rm w_1(\xi)|\rm w_2(\xi))=1$ equals one. Theorem 2 from \cite{FW2022} now applies and gives an equality 
$\TC[p: K \to S^1]=2$. 
\end{example}

\begin{example}\label{exam:canonical}
Consider the principal $G$-bundle $\tau: P\to B$ where $G=S^1$, $P=S^{2n+1}$ and $B=\CP^n$ (the Hopf bundle).
Here the sphere $S^{2n+1}$ is viewed as the unit sphere in $\C^{n+1}$ and the circle $S^1$ acts on it by complex multiplication. Let $X=S^2$ with $S^1$-action given by rotations about the $z$-axis, as in Example \ref{exam:s1s2}.
Consider the fibre bundle $p: E\to B$ with fibre $X=S^2$ where $E=X\times_G P$. Applying (\ref{ineq:main2}) with $r=2$ we obtain
\begin{eqnarray}\label{eq:31}
\TC[p: E\to B] = \TC_2[p: E\to B] \le \secat[\tau: P\to B] + \TC_{2, G}^w(X)\end{eqnarray}
and from Example \ref{exam:s1s2} we know that $ \TC_{2, G}^w(X)=2$.
On the other hand, since $\cat(\CP^n)=n$, we have
$\secat[\tau: P\to B]\leq \cat(B)=n$ (in fact, this is an equality by a cuplength argument). Thus, (\ref{eq:31}) gives
$\TC[p: E\to B] \leq n+2$.

In \cite{FW2022a} the authors studied parametrized topological complexity of sphere bundles. The sphere bundle
$p: E\to B$ which was discussed in the previous paragraph is the unit sphere bundle associated with the rank 3 vector bundle
over $B=\CP^n$ which is the Whitney sum $\eta\oplus \epsilon$ where $\eta$ is the canonical complex line bundle over
$\CP^n$ and $\epsilon$ is a trivial real line bundle. The result of Example 20 from \cite{FW2022a} states that
$\TC[p:E\to B] \le n+2$ and moreover $\TC[p:E\to B] = n+2$ for any even $n$.

Here the point is that, in the example above, the upper bound (\ref{ineq:main2}) is in fact sharp; that is, we have an equality
$$\TC[p\colon E \to B] = G\mbox{-}\cat[p\colon E \to B] + \TC^w_{2,G}(S^2).$$
In fact, since in general $\TC^w_{r,G}(X;P) \leq \TC^w_{r,G}(X)$, 
we see that (\ref{ineqmain}) in this case is an equality as well. This emphasizes the fact that these upper bounds can 
sometimes detect parametrised topological complexity precisely.
\end{example}

\end{document}